\documentclass[12pt,reqno,oneside]{amsart}
\usepackage{amsmath}
\usepackage{amssymb}
\usepackage{amsthm}
\usepackage{graphics}
\usepackage{bm}
\usepackage[colorlinks]{hyperref}
\usepackage[capitalize,nameinlink,noabbrev]{cleveref}
\usepackage{mathrsfs}
\usepackage[normalem]{ulem}
\usepackage{color}
\usepackage[dvipsnames]{xcolor}
\usepackage{mathtools}
\usepackage{geometry}\geometry{margin=1.2 in}
\newtheorem{theorem}{Theorem}[section]
\newtheorem{definition}{Definition}[section]
\newtheorem{proposition}{Proposition}[section]
\newtheorem{lemma}{Lemma}[section]
\newtheorem{problem}{Problem}[section]
\newtheorem{corollary}{Corollary}[section]

\newtheorem{remark}{Remark}[section]

\newtheorem{dream theorem}{Dream theorem}[section]
\numberwithin{equation}{section}

\newcommand{\diag}{\operatorname{diag}}

\newcommand{\supp}{\operatorname{supp}}

\newcommand{\bq}{\mathbf{q}}

\newcommand{\bx}{\mathbf{x}}

\newcommand{\bp}{\mathbf{p}}

\newcommand{\bv}{\mathbf{v}}
\newcommand{\bw}{\mathbf{w}}
\newcommand{\br}{\mathbf{r}}

\newcommand{\by}{\mathbf{y}}

\newcommand{\Q}{\mathbb{Q}}

\newcommand{\Z}{\mathbb{Z}}
\newcommand{\R}{\mathbb{R}}
\newcommand{\N}{\mathbb{N}}
\newcommand{\ta}{\mathbf{\boldsymbol\theta}}

\setcounter{tocdepth}{1}
\usepackage{mathtools}
\usepackage{tikz-cd}
\usepackage{graphicx}

\begin{document}
\allowdisplaybreaks
\title {Winning of Inhomogeneous bad for curves}
\author{Shreyasi Datta}
\address{Department of Mathematics, University of York, Heslington, York, YO10
5DD, United Kingdom}
\email{shreyasi.datta@york.ac.uk, shreyasi1992datta@gmail.com}
\author{Liyang Shao}
\address{Department of Mathematics, University of California, Berkeley, CA 94720-3840, United States}
\email{liyang$_{-}$shao@berkeley.edu,  shaoliyang755@gmail.com}

\thanks{S.\ D.\ was partially supported by EPSRC grant EP/Y016769/1.}

\date{} 
\maketitle
\begin{abstract}
    We prove the absolute winning property of weighted  inhomogeneous badly approximable vectors on nondegenerate analytic curves. This answers a question by Beresnevich, Nesharim, and Yang. In particular, our result is an inhomogeneous version of the main result in [Duke Math J., 171(14), 2022] by Beresnevich, Nesharim, and Yang. 
    Also, the generality of the inhomogeneous part that we considered extends the previous result in [Adv. Math., 324:148–202, 2018]. Moreover, our results even contribute to classical results, namely establishing the inhomogeneous Schmidt's conjecture in arbitrary dimensions. 
    
\end{abstract}
\section{Introduction}

In this paper, we study weighted inhomogeneous badly approximable vectors for analytic curves in $\R^n.$ Let $\br=(r_1,\cdots,r_n)\in\R^n$ be such that $r_i\geq 0$ and $\sum_{i=1}^n r_i=1.$ Such $\br$ is called a \textit{weight} in $\R^n$. Let $\vert \cdot\vert_{\Z}$ denote the distance from the nearest integer. Given a weight $\br$, and $\ta=(\theta_i)_{i=1}^n$, where $\theta_i:\R^n\to \R,$ a map, we define the weighted inhomogeneous badly approximable vectors as follows:
\begin{equation}
\label{def: simul_inho}
    \mathbf{Bad}_{\ta}(\mathbf{r}):=\left\{\mathbf{x}\in\mathbb{R}^n:\liminf_{q\in\mathbb{Z}\setminus\{0\}, \vert q\vert \to\infty}\max_{1\leq i\leq n}\vert qx_i-\theta_i(\bx)\vert_{\mathbb{Z}}^{1/r_i}   \vert q\vert>0 \right\}.
\end{equation}
We denote $\bx=(x_i)\in\R^n$ where $x_i\in\R, 1\leq i\leq n,$ and we define $\vert x\vert^{1/0}:=0,$ for any $x\in (0,1).$
For $\ta(\bx)=\mathbf{0} ~\forall~ \bx\in\R^n$, i.e., in the homogeneous case, we denote $\mathbf{Bad}(\mathbf{r}):=\mathbf{Bad}_{\mathbf{0}}(\mathbf{r}).$ 
These sets defined in \cref{def: simul_inho} possess a rich structure,  that we will discuss soon, which makes them an interesting set in Diophantine approximation. Another reason these sets are interesting to a broader class of mathematicians is due to their connection with homogeneous dynamics, see \cite{Da}.

Jarn\'{i}k in \cite{Jarnik} showed that Hausdorff dimension of the bad set $\mathbf{Bad}(1)$ in $\R$ is $1$. In higher dimensions, when $\br=(1/n,\cdots,1/n)$, Schmidt showed that $\mathbf{Bad}(1/n,\cdots,1/n)$ is of full Hausdorff dimension, as a consequence of a stronger winning property with respect to a game he introduced in \cite{Schmidt}.

In \cite{Sc}, Schmidt conjectured that 
$$ \mathbf{Bad}(1/3,2/3)\cap \mathbf{Bad}(2/3,1/3)\neq \emptyset.$$ 
This conjecture was open for nearly 30 years, and gathered much attraction due to its connection with Littlewood's conjecture, another long-standing open problem, and homogeneous dynamics. In 2011, Schmidt's conjecture was settled assertively in \cite{BPV2011} in a stronger form. Moreover, the question of the winning property of $\mathbf{Bad}(\mathbf{r})$ with respect to the Schmidt game was first posed by Kleinbock in \cite{Kleinbock_Duke_98}. For dimension $2$, this question is addressed in \cite{An}, and in full generality for arbitrary dimensions, a stronger statement is recently proved in \cite{BNY21}.

In the fifties and sixties, Cassels, Davenport, and Schmidt first studied badly approximable vectors on some curves. Since then, understanding the behavior of the set of (weighted) badly approximable vectors inside manifolds and fractals has a very rich history and contains many open problems even to this date. As mentioned before, in a breakthrough \cite{BPV2011}, Schmidt's conjecture was proved; the heart of the proof requires showing that for any bad number $\alpha$, Hausdorff dimension of $L_{\alpha}\cap \textbf{Bad}(i,j)$ is $1$, where $L_{\alpha}$ is the line parallel to the $y$-axis and passing through $(\alpha,0).$ A higher-dimensional Schmidt conjecture is established in \cite{Beresnevich2015, Lei2019}. The papers \cite{BV11, Beresnevich2015, Lei2019} also answer Davenport's problem showing that $\mathbf{Bad}(\mathbf{r})$ has full Hausdorff dimension inside \textit{nondegenerate} manifolds in $\R^n.$ 

Other than the works we already mentioned, there are a series of works in the past few decades that establish Schmidt winning in different contexts; some of them are \cite{AGGL, GY, LDM, KL, BMS2017, LW, JLD, KW13, KW10, BFKRW, BFK, ET, Mos11, Jim1, Jim2, Fish09, KW2005, Da}.

A more desirable result of \textit{absolute winning} of $\mathbf{Bad}(\mathbf{r})$ inside nondegenerate analytic curves is recently proved by Beresnevich, Nesharim, and Yang in \cite{BNY22}. In \cite{McM2010}, this stronger notion of winning was first coined by McMullen, extending Schmidt's notion of winning in \cite{Schmidt}. We refer to \cite{McM2010, BFKRW, BHNS}, for the definition and several properties of the absolute winning sets.

In \cite{BNY22}, the authors ask if their result can be extended to the inhomogeneous setting. In this paper, we address that question by studying  $\mathbf{Bad}_{\ta}(\mathbf{r})$ inside nondegenerate analytic curves.

Our main theorem is as follows:

\begin{theorem}
\label{thm: main}
    Let $\br$ be a weight in $\R^n$.  Suppose $U\subset \R$ be an open interval in $\R$, and  $\varphi:U\to \R^n$ be an analytic map that is nondegenerate. Let $\ta=(\theta_i)_{i=1}^n$, where $\theta_i:\R^n \to \R$ are maps such that $\theta_i|_{\varphi(U)}$ are Lipschitz functions for $1\leq i \leq n$. Then 
    $\varphi^{-1}(\mathbf{Bad}_{\ta}(\mathbf{r}))$ is absolute winning on $U.$
\end{theorem}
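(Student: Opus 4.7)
The plan is to reduce the statement to a dynamical assertion about bounded orbits of affine unimodular lattices, and then to adapt the tree/covering machinery of Beresnevich--Nesharim--Yang \cite{BNY22} to this inhomogeneous setting, using the Lipschitz hypothesis on $\ta|_{\varphi(U)}$ to absorb the dependence on $\bx$.

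First, I would invoke the inhomogeneous Dani correspondence. Writing $u_{\bx,\by}$ for the upper-triangular unipotent in $\operatorname{SL}_{n+2}(\R)$ encoding the affine form $q\mapsto q\bx-\by$, and $g_t=\operatorname{diag}(e^t,e^{-r_1 t},\dots,e^{-r_n t},1)$ (with the last coordinate corresponding to the inhomogeneous shift), a standard argument shows $\bx\in\mathbf{Bad}_{\ta}(\br)$ if and only if the forward orbit $\{g_t\, u_{\bx,\ta(\bx)}\Z^{n+2}_{\mathrm{aff}}\}_{t\geq 0}$ is bounded in the space of unimodular affine lattices, i.e.\ meets a fixed compact set determined by a small constant $\ep>0$. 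Pulling this back through $\varphi$, the set $\varphi^{-1}(\mathbf{Bad}_{\ta}(\br))$ is characterized as the set of $x\in U$ for which, for every $t$, the lattice $g_t\,u_{\varphi(x),\ta(\varphi(x))}\Z^{n+2}_{\mathrm{aff}}$ avoids a target neighbourhood of the cusp.

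Next I would set up the absolute winning game as in \cite{BNY22}. The strategy for Alice has the usual form: at stage $j$, given a ball $B_j$ of radius $\rho_j$, produce a small sub-ball $A_j\subset B_j$ that the adversary must remove, so that after infinitely many stages the surviving point $x$ lies in $\varphi^{-1}(\mathbf{Bad}_\ta(\br))$. Choose a geometric sequence of scales $t_j$ comparable to $-\log\rho_j$. Alice must delete precisely those sub-balls of $B_j$ on which the orbit $g_{t_j} u_{\varphi(x),\ta(\varphi(x))}\Z^{n+2}_{\mathrm{aff}}$ contains an integer affine form of small co-norm. By the Lipschitz condition on $\theta_i|_{\varphi(U)}$ together with smoothness of $\varphi$, the inhomogeneous perturbation $\ta(\varphi(x))$ varies Lipschitz in $x$, so on the scale $\rho_j$ it differs from $\ta(\varphi(x_0))$ by $O(\rho_j)$; this fits inside the same error tolerance as the homogeneous unipotent contraction, meaning the obstruction balls behave structurally just as in the homogeneous analysis.

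The heart of the proof is therefore the covering/counting step: for each scale $t$, one must cover $\{x\in B: g_t u_{\varphi(x),\ta(\varphi(x))}\Z^{n+2}_{\mathrm{aff}}\text{ has a vector of norm}<\ep\}$ by a controlled collection of sub-intervals whose total $1$-dimensional measure decays geometrically in $t$. I would adapt the two-pronged BNY22 argument here: (i) via the nondegeneracy of $\varphi$ and Taylor expansion, partial derivatives $\varphi',\varphi'',\dots$ span $\R^n$, which lets one run the inductive "primitive rational subspace" analysis of \cite{BNY22}; (ii) at each step, replace a primitive rational subspace $V\subset\Z^{n+1}$ by its affine image $V+\bl$ where $\bl$ comes from the $\ta$-shift, and use the Lipschitz bound on $\ta\circ\varphi$ to show that the "flat" directions where the argument might fail are of codimension controlled in the same way as the homogeneous case. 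The key observation is that all of BNY22's good-covering inequalities depend only on translation-invariant quantities (norms of shortest vectors of sub-lattices), which affine translation preserves up to a bounded additive error absorbed by enlarging $\ep$.

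The main obstacle I anticipate is exactly this adaptation of the inductive primitive-subspace argument: in the homogeneous case the relevant subspaces are linear and one can exploit sub-lattice rigidity (e.g.\ Minkowski-type bounds on successive minima), whereas in the inhomogeneous setting one must work with affine sub-grids whose translate $\bl$ depends on $\bx$ through $\ta$, and simultaneously show that Alice can annihilate each such dangerous affine subspace at some finite stage. I would handle this by splitting into a ``small'' case (where $\bl$ is close to a rational point, so the affine sub-grid behaves like a linear one after a harmless correction) and a ``large'' case (where $\bl$ is Diophantine-generic and the affine sub-grid automatically avoids short vectors), analogous to the large/small dichotomy appearing in other inhomogeneous works referenced in the paper, e.g.\ \cite{An}. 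Combining these two cases with the BNY22 tree yields Alice's strategy and hence absolute winning.
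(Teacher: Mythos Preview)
Your approach diverges substantially from the paper's and contains a genuine gap.

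The paper does \emph{not} pass to affine lattices or use an inhomogeneous Dani correspondence. Instead it treats the homogeneous construction of \cite{BNY22} as a black box and builds a refined Cantor set inside it. The mechanism is this: any point $x$ surviving to level $q$ of the BNY22 Cantor tree $\mathcal{J}_q$ satisfies a quantitative \emph{homogeneous} Diophantine condition (obtained from the ordinary Dani correspondence plus Mahler transference), namely that the system $|m|\le Q$, $|m\varphi_i(x)-p_i|<\tfrac{k_1}{n}Q^{-r_i}$ has no nonzero integer solution for $Q$ up to a scale comparable to the tree depth. This is then used to show that for each interval $I\in\mathcal{J}_{q-1}$ at most \emph{one} dangerous inhomogeneous rational $\bp/m$ of the relevant height can have its $\ta$-neighbourhood meet $I$: if there were two, say $\bp_s/m_s$, subtracting them cancels $\ta$ and produces a nonzero integer solution $(m_1-m_2,\bp_1-\bp_2)$ to the forbidden homogeneous system. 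The Lipschitz hypothesis enters only to replace $\theta_1(x)$ by a constant on short sub-intervals, turning the dangerous sets into genuine intervals. One then removes at most a bounded number of extra intervals at each stage on top of the BNY22 removals; the same generalised-Cantor criterion gives nonemptiness, hence $\varphi^{-1}(\mathbf{Bad}_{\ta}(\br))\cap\supp\mu\neq\emptyset$ for every Ahlfors regular $\mu$, and absolute winning follows from the BHNS equivalence.

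The gap in your proposal is the direct adaptation of the BNY22 primitive-subspace induction to affine sub-grids. The claim that the covering inequalities ``depend only on translation-invariant quantities which affine translation preserves up to a bounded additive error'' is not right: the BNY22 estimates run through covolumes of primitive rational sub\emph{lattices} of $\Z^{n+1}$ and the associated $(C,\alpha)$-good/quantitative-nondivergence machinery, and these have no immediate affine analogue---an affine grid need not contain a point near $0$ even when the underlying lattice has short vectors, and the poset of affine sub-grids lacks the rigidity that drives the inductive step. Your small/large dichotomy on the shift $\bl$ is too vague to close this, and since $\bl=\ta(\varphi(x))$ varies with $x$ there is not even a single Diophantine type to split on. The paper sidesteps all of this precisely by subtracting two inhomogeneous approximants to eliminate $\ta$ and reduce to the homogeneous theory already established in \cite{BNY22}; that cancellation is the idea you are missing.
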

 A map $\theta:A\subseteq \R^n\to \R$ is called Lipschitz on $A$ if there exists $d>0$ such that,
\begin{equation*}
    |\theta(\bx)-\theta(\by)|\leq d \Vert \bx-\by\Vert_{\infty}, \forall ~ \bx=(x_i),\by=(y_i)\in A,
\end{equation*} where $\Vert \cdot\Vert_\infty$ is the max norm.
Any continuously differentiable map on a closed bounded interval in $\R$ is always Lipschitz due to the mean value theorem. One can find the definition of a nondegenerate map in \cref{nondeg}.

Now we state some corollaries of \cref{thm: main}. Using the properties of absolute winning sets; see \cite[Lemma 1.2]{BNY22}, as an immediate consequence of \cref{thm: main}, we get the following.
\begin{corollary}\label{cor_curve}
    Let $n_i\in\N$, $i\geq 1$, $\{\ta^{i}_{j}\}_{j\geq 1}, \ta^{i}_j:\R^{n_i}\to\R^{n_i}$ be a sequence of Lipschitz maps, $W_i$ be a countable collection of weights in $\R^{n_i}$, and $U\subset \R,$ an open interval. Suppose for each $i\in\N$, $\varphi_i: U\to \R^{n_i}$ is an analytic nondegenerate map. Let $\mu$ be an Ahlfors regular measure on $U$ such that $U\cap\supp\mu\neq \emptyset.$
    Then 
    \begin{equation}
        \dim \bigcap_{i\geq 1}\bigcap_{\br\in W_i}\bigcap_{j\geq 1} \varphi_i^{-1}(\mathbf{Bad}_{\ta^{i}_{j}}(\mathbf{r}))\cap\supp\mu=\dim (\supp\mu).
    \end{equation}
    In case $\mu$ is the Lebesgue measure, 
    \begin{equation*}
        \dim \bigcap_{i\geq 1}\bigcap_{\br\in W_i}\bigcap_{j\geq 1} \varphi_i^{-1}(\mathbf{Bad}_{\ta^{i}_{j}}(\mathbf{r}))=1.
    \end{equation*}
\end{corollary}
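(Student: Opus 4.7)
The plan is to deduce Corollary 1.1 directly from Theorem 1.1 by invoking two standard stability properties of the class of absolute winning sets, both recorded in [BNY22, Lemma 1.2]: closure under countable intersections, and the full-dimension property on supports of Ahlfors regular measures.

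First, I apply Theorem 1.1 to each triple $(i,\br,j)$ with $i\in\N$, $\br\in W_i$, and $j\ge 1$. The hypotheses are met: $\varphi_i$ is analytic and nondegenerate on $U$, and each component of the vector-valued map $\ta^{i}_{j}:\R^{n_i}\to\R^{n_i}$ is Lipschitz on $\R^{n_i}$, hence Lipschitz on $\varphi_i(U)$ in particular. Theorem 1.1 therefore asserts that
\[
E_{i,\br,j}:=\varphi_i^{-1}\bigl(\mathbf{Bad}_{\ta^{i}_{j}}(\br)\bigr)\subseteq U
\]
is absolute winning on $U$. Since each $W_i$ is countable and the indices $i,j$ range over $\N$, the family $\{E_{i,\br,j}\}$ is a countable collection of absolute winning subsets of the common set $U\subseteq\R$; the differing ambient dimensions $n_i$ pose no obstruction, since the intersection is taken in the one-dimensional domain $U$.

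Second, by closure under countable intersection,
\[
E:=\bigcap_{i\ge 1}\bigcap_{\br\in W_i}\bigcap_{j\ge 1}E_{i,\br,j}
\]
is itself absolute winning on $U$. The Ahlfors-regular dimension property then delivers $\dim(E\cap\supp\mu)=\dim(\supp\mu)$ for any Ahlfors regular $\mu$ with $U\cap\supp\mu\neq\emptyset$, which is exactly the first displayed equation of the corollary. The Lebesgue-measure conclusion is the special case $\mu=\mathrm{Leb}|_U$, for which $\dim(\supp\mu)=1$.

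There is no essential conceptual obstacle in this deduction: it is a short bookkeeping argument once Theorem 1.1 and the abstract properties of absolute winning are in hand. The only points requiring care are to verify countability of the triple index $(i,\br,j)$ and to observe that the scalar Lipschitz hypothesis of Theorem 1.1 passes componentwise from the vector-valued $\ta^{i}_{j}$ to each of its coordinates $\theta^{i}_{j,k}\colon\R^{n_i}\to\R$, $1\le k\le n_i$.
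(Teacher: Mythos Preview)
Your deduction is correct and matches the paper's own justification: the corollary is stated as an immediate consequence of Theorem~\ref{thm: main} via the properties of absolute winning sets in \cite[Lemma~1.2]{BNY22}, namely closure under countable intersections and full Hausdorff dimension on supports of Ahlfors regular measures. Your additional care in checking countability of the index set and the componentwise Lipschitz condition simply makes explicit what the paper leaves implicit.
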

The above corollary establishes the inhomogeneous version of Davenport's problem \cite{Dav64} for analytic curves. Furthermore, by using a fibering lemma as in \cite[\S 2.1]{Beresnevich2015}, and Marstrand’s slicing lemma \cite[Theorem 10.11]{Mat1995}, \cite[Lemma 7.12]{Fal2003}, we extend some parts of the previous corollary to any nondegenerate analytic manifold. The deduction of \cref{cor_manifold} from \cref{cor_curve} is exactly the same as in \cite[\S 2.1]{Beresnevich2015}.
\begin{corollary}\label{cor_manifold}
    Let $m,n,k\in \N$, $1\leq i\leq k,$ $\{\ta^{i}_{j}\}_{j\geq 1}, \ta^{i}_j:\R^{n}\to\R^{n}$ be a sequence of Lipschitz maps, $W$ be a countable set of weights in $\R^{n}$, and $U\subset \R^m,$ be an open ball. Suppose for each $1\leq i \leq k$, $\mathbf{\varphi}_i: U\to \R^{n}$ is an analytic nondegenerate map. 
    Then  
    \begin{equation*}
        \dim \bigcap_{i=1}^k\bigcap_{\br\in W}\bigcap_{j\geq 1} \varphi_i^{-1}(\mathbf{Bad}_{\ta^{i}_{j}}(\mathbf{r}))=m.
    \end{equation*}
\end{corollary}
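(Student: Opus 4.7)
The plan is the standard fibering-and-slicing reduction from manifolds to curves, as in \cite[\S 2.1]{Beresnevich2015}. Fix any point $\bx_0 \in U$ at which all of $\varphi_1,\dots,\varphi_k$ are nondegenerate. By Beresnevich's fibering lemma, after shrinking $U$ to a small open ball around $\bx_0$ and applying an analytic change of coordinates, I can foliate $U$ by a real-analytic family of curves $\{\gamma_\bs : I \to U\}_{\bs \in V}$ with $V \subseteq \R^{m-1}$ open, such that for every parameter $\bs \in V$ and every index $1 \leq i \leq k$ the composite analytic curve $\varphi_i \circ \gamma_\bs : I \to \R^n$ is nondegenerate. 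The simultaneous nondegeneracy of the entire finite family $\{\varphi_i \circ \gamma_\bs\}_{i=1}^k$ for a single choice of foliation is the one step that requires a short argument: the set of directions along which at least one $\varphi_i \circ \gamma_\bs$ fails to be nondegenerate is thin (the failure is cut out by finitely many analytic conditions that cannot be identically satisfied), so a generic choice of foliation avoids it, exactly as in \cite[\S 2.1]{Beresnevich2015}.

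Fix such a foliation. For each $\bs \in V$, the curves $\varphi_i \circ \gamma_\bs$ satisfy the hypotheses of \cref{cor_curve} with $n_i = n$, the same countable collection $W$ of weights, and the same sequence of Lipschitz inhomogeneous maps $\{\ta^i_j\}$ (the Lipschitz property is inherited automatically, since it is a property of $\ta^i_j$ on all of $\R^n$). Taking $\mu$ to be Lebesgue measure on $I$, \cref{cor_curve} yields
\begin{equation*}
\dim \bigcap_{i=1}^k \bigcap_{\br\in W} \bigcap_{j\geq 1} (\varphi_i \circ \gamma_\bs)^{-1}\!\bigl(\mathbf{Bad}_{\ta^i_j}(\br)\bigr) \;=\; 1 \qquad \text{for every } \bs \in V.
\end{equation*}
Since $(\varphi_i \circ \gamma_\bs)^{-1}(\mathbf{Bad}_{\ta^i_j}(\br)) = \gamma_\bs^{-1}\bigl(\varphi_i^{-1}(\mathbf{Bad}_{\ta^i_j}(\br))\bigr)$, every fibre of the foliation meets the set $E := \bigcap_{i,\br,j} \varphi_i^{-1}(\mathbf{Bad}_{\ta^i_j}(\br))$ in a set of Hausdorff dimension $1$.

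To pass from this fibrewise information to the global dimension of $E$, I apply Marstrand's slicing lemma \cite[Theorem 10.11]{Mat1995}, \cite[Lemma 7.12]{Fal2003}, after straightening the foliation by the analytic diffeomorphism $(\bs,t) \mapsto \gamma_\bs(t)$; Hausdorff dimension is invariant under bi-Lipschitz maps, so this straightening is harmless. The slicing lemma then gives $\dim E \geq (m-1) + 1 = m$, and the reverse inequality $\dim E \leq m$ is immediate from $E \subseteq U \subseteq \R^m$. The main obstacle — and the only point requiring care beyond citing \cref{cor_curve} — is arranging the simultaneous nondegeneracy of all $k$ composed curves along a single generic foliation, which is handled as above.
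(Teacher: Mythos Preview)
Your proposal is correct and follows exactly the approach the paper indicates: the paper does not give a standalone proof of \cref{cor_manifold} but simply says ``the deduction of \cref{cor_manifold} from \cref{cor_curve} is exactly the same as in \cite[\S 2.1]{Beresnevich2015}'', i.e.\ Beresnevich's fibering lemma together with Marstrand's slicing lemma, which is precisely what you have written out. Your identification of the one delicate point---arranging a single foliation along which all $k$ composed curves $\varphi_i\circ\gamma_\bs$ are simultaneously nondegenerate---is correct, and this is handled in \cite[\S 2.1]{Beresnevich2015} just as you describe.
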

\begin{remark}
     As pointed out before, Corollary \ref{cor_curve} implies Corollary \ref{cor_manifold}. For curves, Corollary \ref{cor_curve} is stronger than Corollary \ref{cor_manifold}. Because in the former case we consider infinite intersection of curves embedded in different dimensions, whereas in the later case, we can take only finite intersection of manifolds embedded in the same dimension.
\end{remark}
By \cite[Lemma 1.2 (iv) ]{BNY22}, (and \cite{BHNS}) \cref{thm: main} implies that, for any Ahlfors regular measure $\mu,$ with $U\cap \supp\mu\neq\emptyset$, we must have $\varphi^{-1}(\mathbf{Bad}_{\ta}(\mathbf{r}))\bigcap\supp\mu\neq\emptyset.$ Surprisingly, by \cite[Proposition 2.18, Corollary 4.2]{BHNS} (\cite[Lemmata 1.2-1.4]{BNY22}), the last implication is sufficient to prove absolute winning. Hence, we prove the following theorem that is equivalent to \cref{thm: main}.

\begin{theorem}
\label{thm: twin main}
Let $\ta, \mathbf{r},U,\varphi$, be as in \cref{thm: main}. Suppose $\mu$ be a $(C,\alpha)$-Ahlfors regular measure such that $U\cap \supp{\mu}\neq \emptyset.$ 
Then 
\begin{equation}
    \varphi^{-1}(\mathbf{Bad}_{\ta}(\mathbf{r}))\bigcap\supp\mu\neq\emptyset.
\end{equation}
\end{theorem}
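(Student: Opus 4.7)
The plan is to follow the dynamical template of \cite{BNY22}, extended to the inhomogeneous setting. First, via a Dani-type correspondence, I would reformulate the condition $\varphi(x) \in \mathbf{Bad}_\ta(\br)$ as a statement about the orbit
\begin{equation*}
\{\, g_t\, u_{\varphi(x)}\, \Lambda_{\ta(\varphi(x))} : t \ge 0\,\}
\end{equation*}
in the space of unimodular affine lattices in $\R^{n+1}$ remaining in a compact set, where $g_t$ is the weighted diagonal flow associated with $\br$, $u_{\bx}$ is the unipotent encoding $\bx$, and $\Lambda_{\ta} = \Z^{n+1} + (0,\ta)$. Escape from compacta is measured by the length of the shortest nonzero affine-lattice vector; crucially, this shortest-vector function of $x$ is built from polynomial expressions in $\varphi(x)$ additively perturbed by a Lipschitz function of $\varphi(x)$.

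Next, I would construct a nonempty Cantor-like subset of $U\cap \supp\mu$ contained in $\varphi^{-1}(\mathbf{Bad}_\ta(\br))$, in the spirit of \cite{BNY22}. At each level $k$ of a geometric sequence of scales $T_k$, one removes from the previously chosen ball the set of $x$ where the orbit at time $T_k$ produces a vanishingly short affine-lattice vector; the resulting nested intersection consists of survivors $x$ whose orbit is bounded, i.e., with $\varphi(x)\in\mathbf{Bad}_\ta(\br)$. The two essential inputs are (i) a quantitative non-divergence estimate providing a power-saving bound for the $\mu$-measure of each dangerous set in terms of the short-vector scale, and (ii) the $(C,\alpha)$-Ahlfors regularity of $\mu$, which converts this power-saving bound into a definite-proportion survival at each level, ensuring nonemptiness of the nested intersection. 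By \cite{BHNS} (and the reduction from \cref{thm: main} to \cref{thm: twin main} already indicated in the excerpt), this suffices.

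The hard part will be item (i): proving a quantitative non-divergence estimate for the \emph{inhomogeneous-affine} flow along the nondegenerate analytic curve $\varphi$. In the homogeneous case of \cite{BNY22}, the entries of $g_t u_{\varphi(x)}$ applied to a primitive lattice vector are real-analytic in $x$, and the classical $(C,\alpha)$-good property of restrictions of analytic functions to nondegenerate curves powers the Kleinbock--Margulis non-divergence argument. In our case, the inhomogeneous shift contributes an additive term $g_t\cdot(0,\ta(\varphi(x)))$ that is merely Lipschitz in $x$. The task is to show that this Lipschitz perturbation is dominated by the polynomial behavior of the analytic entries at the relevant scales, so that an analog of the non-divergence estimate still holds with possibly weaker but still admissible constants; the uniform bound on the Lipschitz constant of $\ta\circ\varphi$ on $\varphi(U\cap\supp\mu)$ guaranteed by the hypothesis should be exactly what is needed to control this perturbation uniformly across scales. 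Once this inhomogeneous-affine non-divergence estimate is in place, it slots into the Cantor construction to yield the desired nonempty intersection, proving \cref{thm: twin main}.
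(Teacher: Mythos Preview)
Your proposal takes a fundamentally different route from the paper, and the step you flag as ``the hard part'' is a genuine gap. You want to run a Kleinbock--Margulis quantitative non-divergence argument directly on the \emph{affine} lattice orbit, treating the inhomogeneous shift $\ta(\varphi(x))$ as a Lipschitz perturbation of the analytic coordinate functions. But the non-divergence machinery hinges on the $(C,\alpha)$-good property of those functions, and a Lipschitz additive term need not preserve it: the regime of interest is precisely where the analytic part is \emph{small}, so there is no reason the perturbation is subordinate ``at the relevant scales''. No inhomogeneous non-divergence estimate for varying Lipschitz shifts along nondegenerate curves is available, and your sketch offers no mechanism to produce one.

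The paper sidesteps this entirely via a transference/differencing argument in the spirit of \cite{BV13,ABV}. It takes the \emph{homogeneous} Cantor set $\mathcal{K}_\infty\subset\varphi^{-1}(\mathbf{Bad}(\br))\cap\supp\mu$ from \cite{BNY22} as a black box, and observes (Lemma~\ref{bounded}, Corollary~\ref{dual}, Proposition~\ref{result} via Mahler's transference, Lemma~\ref{transfer}) that for $x$ surviving to level $q$, the homogeneous simultaneous system $|m|\le Q$, $|m\varphi_i(x)-p_i|<\tfrac{k_1}{n}Q^{-r_i}$ has no nonzero solutions up to height $Q\sim R^{q}$. The punchline (Propositions~\ref{key0}, \ref{key}) is that two distinct inhomogeneous dangerous domains $\Delta^{j,m}_\ta(\bv)$ of the same scale meeting a common level-$(q{-}1)$ interval would, upon \emph{differencing}, yield a forbidden homogeneous solution; hence each surviving interval meets at most $O(1)$ such domains, a cost the generalized-Cantor bookkeeping of \cite{BV11multi} absorbs. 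The Lipschitz hypothesis on $\ta$ enters only to replace the $x$-dependent domains by genuine intervals $\widetilde\Delta^{j,m}_\ta(\bv)$ (Lemma~\ref{interval inside domain}), never in any non-divergence estimate.
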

The relevant definition of \textit{Ahlfors regular measure} can be found in \S ~\cref{preli}.
Next, we point out how our main theorem relates to previous results in this direction. \noindent
\subsection*{Remarks}
\begin{enumerate}
\item \cref{thm: main} represents complete inhomogeneous version of \cite[Theorem 1.1]{BNY22}.
\item For $n=2$, \cref{thm: main} extends \cite[Theorem 1.1]{ABV} from $\ta$ being a constant to any Lipschitz function.
\item \cref{cor_manifold} gives the inhomogeneous analogue of \cite[Theorem 1]{Beresnevich2015}, and in particular, answers Davenport's questions \cite[p. 52]{Dav64} in much more generality.
\item\cref{cor_manifold}, for $n=m\geq 2$, $\varphi_i(\bx)=\bx ~\forall~ \bx\in\R^n, 1\leq i\leq k$ 
is also new, giving a contribution to the classical setup, namely solving the inhomogeneous analogue of Schmidt’s conjecture in arbitrary dimensions.
\end{enumerate}

The basic idea of the proof of \cref{thm: twin main} is inspired by the works \cite{ABV,BV13}. Our approach is to build on the construction of Cantor sets in \cite{BNY22}. Specifically, 
we construct a new Cantor set inside the Cantor set constructed in \cite{BNY22}, such that the new Cantor set is also inside the inhomogeneous bad sets; see \cite[Section 3]{BV13} for the philosophy behind the strategy and \S ~\cref{proof-of-main} for details. One of the main challenges in this paper comes from the fact that we consider the inhomogeneous part to be a function. This is a more general setup, compared to \cite{ABV, BV13}, 
which gives rise to the difficulties that the desired Cantor set needs to avoid certain sets that are not intervals. To overcome this, vaguely speaking, we approximate the sets that we want to avoid by certain intervals. This also requires us to categorize rationals in a more complex manner than in \cite{ABV}.


\section{Preliminaries}\label{preli}
Let us first recall the definition of Ahlfors regular measures from \cite[\S 1]{BNY22}.

\begin{definition}
\label{ahlfors}
    A Borel measure $\mu$ on $\R^d$ is called $(C,\alpha)$-Ahlfors regular if there exists $\rho_0>0$ such that for any ball $B(\bx,\rho)\subset\R^d$ where $\bx\in\supp\mu$ and $\rho\leq\rho_0$, we have $$
    C^{-1}\rho^{\alpha}\leq\mu(B(\bx,\rho))\leq C\rho^{\alpha}.
    $$
\end{definition}

Let us recall the definition of a nondegenerate map. 
\begin{definition}
    \label{nondeg}
    An analytic map $\varphi=(\varphi_1,\cdots,\varphi_n):U\to \R^n$, defined on an open interval $U\subset \R$, is nondegenerate if $\varphi_1,\cdots,\varphi_n$ together with the constant function 1 are linearly independent over $\R.$
\end{definition}
The above definition is restrictive to analytic maps. In general, the definition of nondegenerate map with respect to a measure can be found in \cite{KM}.

For any $g\in \mathrm{SL}_{n+1}(\R)$, $g\Z^{n+1}$ is a lattice in $\R^{n+1}$ with covolume $1$. Thus, by the equivalence map $g\mathrm{SL}_{n+1}(\Z)$ to $g\Z^{n+1}$, we associate $X_{n+1}$ with the space of all lattices with covolume $1$ (i.e., unimodular). For every $\varepsilon>0$, let us define 
$$
K_{\varepsilon}:=\{\Lambda\in X_{n+1}:=\mathrm{SL_{n+1}}(\R)/\mathrm{SL_{n+1}}(\Z)~|~ \inf_{\bx\in \Lambda\setminus 0}\Vert \bx\Vert\geq \varepsilon\}. $$ Mahler's compactness criterion ensures that these are compact sets in $X_{n+1}.$ Here and elsewhere $\Vert \cdot\Vert$ is the Euclidean norm.

Next, we recall the definition of generalized Cantor set, following \cite[\S 2.2]{BNY22}.
\begin{definition}
    Given $R\in\N$, $I\subset\R$ a closed interval, the $R$-partition of $I$ is the collection of closed intervals obtained by dividing $I$ to $R$ closed subintervals of the same length $R^{-1}|I|$, denoted as \textbf{Par}$_R(I)$. Here, $\vert I\vert$ is the Lebesgue measure of the interval $I.$

    Also, given $\mathcal{J}$ a collection of closed intervals,$$\mathbf{Par}_R(\mathcal{J}):=\cup_{I\in\mathcal{J}}\mathbf{Par}_R(I).$$
\end{definition}

To define a generalized Cantor set, we first let $\mathcal{J}_0=\{I_0\}$, where $I_0$ is a closed interval. Then inductively $\mathcal{I}_{q+1}=\mathbf{Par}_R(\mathcal{J}_q)$. In addition, we remove a subcollection $\hat{\mathcal{J}}_q$ from $\mathcal{I}_{q+1}$. In other words, we let $\mathcal{J}_{q+1}=\mathcal{I}_{q+1}\setminus\hat{\mathcal{J}}_q$. In particular,
\begin{equation}
    \label{length}
    \forall I\in\mathcal{J}_q,|I|=|I_0|R^{-q}.
\end{equation}
We define the generalized Cantor set,
\begin{equation}\label{K_infty}\mathcal{K}_{\infty}:=\bigcap_{q\in\N\cup\{0\}}\bigcup_{I\in\mathcal{J}_q}I.\end{equation}

Next, we recall the following theorem that we will use to get the non-emptiness of a generalized Cantor set.

\begin{theorem}{\cite[Theorem 3]{BV11multi}}
    \label{nemp}
   $\forall q\in\N\cup\{0\}$, suppose $\hat{\mathcal{J}}_q$ can be written as \begin{equation}\hat{\mathcal{J}}_q=\cup_{p=0}^q\hat{\mathcal{J}}_{p,q}\label{jq}.\end{equation} Let us denote $h_{p,q}:=\max_{J\in\mathcal{J}_p}\#\{I\in\hat{\mathcal{J}}_{p,q}:I\subset J\}$. If \begin{equation}
        \label{formula}
        t_0:= R- h_{0,0}>0, t_q:=R-h_{q,q}-\sum_{j=1}^q\frac{h_{q-j,q}}{\prod_{i=1}^jt_{q-i}}>0, ~\forall~ q\geq 1,
    \end{equation}
    then $\mathcal{K}_{\infty}\neq\emptyset.$
\end{theorem}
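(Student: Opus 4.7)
The plan is to reduce the non-emptiness of $\mathcal{K}_\infty$ to a positivity statement about $|\mathcal{J}_q|$ via compactness, and then establish that positivity by a quantitative induction. Each $\mathcal{K}^{(q)} := \bigcup_{I \in \mathcal{J}_q} I$ is a finite union of closed subintervals of $I_0$, hence compact, and $\mathcal{K}^{(q+1)} \subseteq \mathcal{K}^{(q)}$ because every element of $\mathcal{J}_{q+1}$ lies inside some element of $\mathcal{J}_q$. By Cantor's intersection theorem it therefore suffices to show $\mathcal{J}_q \ne \emptyset$ for every $q$, and the natural quantitative strengthening is
\begin{equation*}
|\mathcal{J}_q| \;\ge\; M_q \;:=\; \prod_{i=0}^{q-1} t_i \qquad (M_0 := 1).
\end{equation*}
Since the hypothesis \eqref{formula} gives $t_i > 0$ for every $i$, this yields $M_q > 0$ as needed.

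I would prove this lower bound by induction on $q$. The construction gives the exact recursion $|\mathcal{J}_{q+1}| = R|\mathcal{J}_q| - |\hat{\mathcal{J}}_q|$. Using the decomposition \eqref{jq} together with the definition of $h_{p,q}$, we have $|\hat{\mathcal{J}}_{p,q}| \le h_{p,q}\, |\mathcal{J}_p|$, and hence
\begin{equation*}
|\mathcal{J}_{q+1}| \;\ge\; R|\mathcal{J}_q| - \sum_{p=0}^q h_{p,q}\,|\mathcal{J}_p|.
\end{equation*}
Multiplying the defining equation for $t_q$ in \eqref{formula} through by $\prod_{i=0}^{q-1} t_i$ yields the matching identity
\begin{equation*}
M_{q+1} = R M_q - \sum_{p=0}^q h_{p,q}\, M_p,
\end{equation*}
so the induction would be immediate if one could compare the two displays coefficient-by-coefficient, with $|\mathcal{J}_p|$ playing the role of $M_p$.

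The main obstacle is that a na\"ive strong induction does not close: if $|\mathcal{J}_p|$ strictly exceeds $M_p$ for some $p<q$, then $h_{p,q}|\mathcal{J}_p|$ exceeds $h_{p,q} M_p$, shrinking the lower bound on $|\mathcal{J}_{q+1}|$ in the wrong direction. To close the induction I would refine it ancestor-by-ancestor: for each $J \in \mathcal{J}_p$ track the local descendant count $N(J, q) := \#\{I \in \mathcal{J}_q : I \subseteq J\}$, exploiting the fact that $h_{p,q}$ is by definition an individual pointwise bound on $\#(\hat{\mathcal{J}}_{p,q} \cap J)$, not merely on the aggregate. The per-ancestor recursion then weights each $h_{p',q}$ by the local survival count $\prod_{i=p}^{q-1} t_i$ predicted by \eqref{formula}, telescoping correctly; summing over $\mathcal{J}_p$ and using the inductive hypothesis $|\mathcal{J}_p| \ge M_p$ recovers the global bound $|\mathcal{J}_q| \ge M_q$. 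Carefully setting up this bookkeeping so that the denominators $\prod_{i=1}^j t_{q-i}$ appearing in \eqref{formula} exactly match the local descendant counts inside each $J\in\mathcal{J}_p$ is the technical heart of the argument.
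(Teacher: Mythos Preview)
The paper does not prove this theorem: it is quoted verbatim from \cite{BV11multi} and used as a black box, so there is no in-paper proof to compare against. I will therefore just comment on your proposal on its own terms.

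Your reduction via Cantor's intersection theorem and the target inequality $|\mathcal{J}_q|\ge M_q:=\prod_{i=0}^{q-1}t_i$ are exactly right, and you correctly derive both the recursion $|\mathcal{J}_{q+1}|\ge R|\mathcal{J}_q|-\sum_{p}h_{p,q}|\mathcal{J}_p|$ and the matching identity $M_{q+1}=RM_q-\sum_p h_{p,q}M_p$. You are also right that the na\"ive comparison fails because excess in $|\mathcal{J}_p|$ over $M_p$ hurts rather than helps. Where your argument becomes a genuine gap is the proposed fix: the per-ancestor claim you gesture at (controlling $N(J,q)$ for $J\in\mathcal{J}_p$) runs into the problem that removals coming from $\hat{\mathcal{J}}_{p',q}$ with $p'<p$ can all concentrate inside a single descendant $J$, so there is no pointwise lower bound of the form $N(J,q)\ge\prod_{i=p}^{q-1}t_i$; your final paragraph acknowledges that the bookkeeping is the ``technical heart'' but does not carry it out, and as written it is not clear it can be made to telescope.

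There is a much cleaner way to close the induction that stays global. Set $e_q:=|\mathcal{J}_q|-M_q$ and prove the strengthened hypothesis $e_{q+1}\ge t_q e_q$ by strong induction (note $e_0=0$). Subtracting the two recursions gives $e_{q+1}\ge (R-h_{q,q})e_q-\sum_{p<q}h_{p,q}e_p$. The inductive hypothesis yields $e_q\ge\bigl(\prod_{i=p}^{q-1}t_i\bigr)e_p\ge 0$, hence $e_p\le e_q/\prod_{i=p}^{q-1}t_i$, and summing gives $\sum_{p<q}h_{p,q}e_p\le e_q\sum_{p<q}h_{p,q}/\prod_{i=p}^{q-1}t_i=e_q(R-h_{q,q}-t_q)$ by the very definition of $t_q$ in \eqref{formula}. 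This produces $e_{q+1}\ge t_q e_q$ on the nose, so $e_q\ge 0$ for all $q$, i.e.\ $|\mathcal{J}_q|\ge M_q>0$. This replaces your vague local bookkeeping with a one-line strengthening of the inductive statement.
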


We also recall the following transference lemma from \cite{Mahler1939}, which we state in the form as it appeared in \cite[Lemma 11]{Beresnevich2015}.
\begin{lemma}
\label{transfer}
    Let $T_0,\cdots, T_n>0$, $L_0,L_1,\cdots,L_n$ be a system of linear forms in variables $u_0,u_1,\cdots,u_n$ with real coefficients and determinants $d\neq 0$, and let $L_0',L_1',\cdots,L_n'$ be the transposed system of linear forms in variables $v_0,v_1,\cdots,v_n$, so that $\sum_{i=0}^nL_iL_i'=\sum_{i=0}^nu_iv_i$. Let $\iota^n=\frac{\prod_{i=0}^nT_i}{|d|}.$ Suppose that there exists $(u_0,u_1,\cdots,u_n)\in\Z^{n+1}\setminus \{\mathbf{0}\}$ such that
    \begin{equation}
        \label{system1}
        |L_i(u_0,u_1,\cdots,u_n)|\leq T_i,~~\forall~~ 0\leq i\leq n.
    \end{equation}
    Then there exists a non-zero integer point $(v_0,v_1,\cdots,v_n)$ such that
    \begin{equation}
        \label{system2}
        |L_0'(v_0,v_1,\cdots,v_n)|\leq n\iota/T_0\textit{ and }|L'_i(v_0,v_1,\cdots,v_n)|\leq \iota/T_i,~~\forall~~ 1\leq i\leq n.
    \end{equation}
\end{lemma}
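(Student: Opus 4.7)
This is a classical transference inequality of Mahler \cite{Mahler1939}; my plan is to follow the standard geometry-of-numbers proof combining Minkowski's first theorem with the lattice-dual-lattice duality arising from the transposition identity.

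First, I would read the hypothesis as a first-successive-minimum statement for the symmetric convex body
$$K=\{u\in\R^{n+1}:|L_i(u)|\leq T_i,\ 0\leq i\leq n\},$$
of $(n{+}1)$-volume $2^{n+1}\iota^n$, namely $\lambda_1(K,\Z^{n+1})\leq 1$. The transposition identity $\sum_iL_iL_i'=\sum u_iv_i$ forces the coefficient matrices of $(L_i)$ and $(L_i')$ to be related by $A'=(A^{T})^{-1}$; hence $|\det A'|=|d|^{-1}$, and, setting $s=(L_0(u),\ldots,L_n(u))$ and $t=(L_0'(v),\ldots,L_n'(v))$, one has the orthogonality $s\cdot t=u\cdot v$ identically in $u,v$.

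Next, I would exhibit the required $v$ as a nonzero integer point in the symmetric convex body
$$\Omega=\{v\in\R^{n+1}:|L_0'(v)|\leq n\iota/T_0,\ |L_i'(v)|\leq \iota/T_i,\ 1\leq i\leq n\},$$
whose volume a direct computation yields as $2^{n+1}n\iota$. When $n\iota\geq 1$, Minkowski's first theorem applied to $\Omega$ and $\Z^{n+1}$ settles the matter at once. In the harder regime $n\iota<1$ the volume of $\Omega$ is insufficient, and one must use the nonzero $u$ supplied by the hypothesis. Here I would introduce the auxiliary symmetric body
$$\widetilde\Omega=\{v\in\R^{n+1}:|L_i'(v)|\leq \iota/T_i,\ 1\leq i\leq n,\ |u\cdot v|<1\},$$
whose volume in the $w=A'v$ coordinates (under which $u\cdot v=s\cdot w$) computes to $2^{n+1}T_0/|s_0|\geq 2^{n+1}$, using $|s_0|=|L_0(u)|\leq T_0$ from the hypothesis. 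Minkowski's first theorem then supplies a nonzero integer $v\in\widetilde\Omega$; integrality of $u\cdot v$ together with $|u\cdot v|<1$ forces $s\cdot t=u\cdot v=0$, and unpacking this orthogonality gives
$$|s_0|\,|L_0'(v)|\leq\sum_{i=1}^n|s_i|\,|L_i'(v)|\leq\sum_{i=1}^n T_i\cdot\iota/T_i=n\iota,$$
so $|L_0'(v)|\leq n\iota/|s_0|$; combined with $|L_i'(v)|\leq \iota/T_i$ for $i\geq 1$ this delivers the required system \eqref{system2}, modulo the bookkeeping step discussed below.

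The main technical obstacle is matching the bound $n\iota/|s_0|$ produced by the slicing argument against the stated bound $n\iota/T_0$. This is handled by a coordinate-reindexing reduction (promoting the index $i_*$ which maximizes $|s_{i_*}|/T_{i_*}$ into the role of the distinguished index $0$, so that the ratio $T_0/|s_0|$ is controlled), or alternatively by iterating the construction with sharpened parameters; the degenerate case $s_0=0$ is also absorbed into this reindexing step. These manipulations are the content of Mahler's original argument \cite{Mahler1939}, and a streamlined version of essentially the same proof appears in \cite[Lemma 11]{Beresnevich2015}.
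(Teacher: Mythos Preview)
The paper does not prove this lemma at all: it is simply recalled from the literature with the attribution ``the following transference lemma from \cite{Mahler1939}, which we state in the form as it appeared in \cite[Lemma 11]{Beresnevich2015}'', and no argument is given. So there is nothing in the paper to compare your proposal against beyond the citation itself.

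Your sketch is the standard Mahler argument and is essentially correct. A couple of small comments: (i) in the auxiliary-body step you need $s_0\neq 0$ for the volume computation to make sense, and (ii) the orthogonality step yields $|L_0'(v)|\le n\iota/|s_0|$, which goes the \emph{wrong} way relative to the target $n\iota/T_0$ since $|s_0|\le T_0$. You flag both of these and defer them to the reindexing trick (promote the coordinate with maximal $|s_i|/T_i$ into the distinguished slot), which is exactly how Mahler's proof handles them; this is the genuine content of the argument, so calling it ``bookkeeping'' undersells it a little, but your outline is accurate and your references are the right ones.
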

\subsection{Notations and auxiliary results}\label{subsection: Notation}
Without loss of generality, we assume,
\begin{equation}\label{weights chain}
r_1\geq r_2\geq \cdots\geq r_n>0,
\end{equation} where $\mathbf{r}$ is the weight in $\R^n$ that we considered in \cref{thm: main}.  Without loss of generality, we can take $r_n>0$ as otherwise, $\textbf{Bad}_{\hat\ta}{(\hat\br)}\times \R= \textbf{Bad}_{\ta}(\br),$ where $\hat\br=(r_1,\cdots,r_{n-1}),$ and $\hat\ta=(\theta_i)_{i=1}^{n-1}.$ 

We take $\varphi$ be a nondegenerate analytic map with domain $U$ as in Definition \ref{nondeg}. Let us take $\mu$ to be $(C,\alpha)$-Ahlfors regular measure on $\R$ such that $U\cap\supp\mu\neq \emptyset$ with $\rho_0$ be defined as \cref{ahlfors}. Let us take $x_0\in \supp \mu\cap U$ such that $\varphi'(x_0)\neq 0$. This is possible because $\varphi'$ only vanishes at countably many points whereas $U\cap \supp\mu$ is uncountable. Following \cite[\S 2.1]{BNY22}, since $\varphi_1'(x_0)\neq 0$, by taking $I_0$ small enough, we can guarantee that $\vert \varphi'_1\vert$ is bounded below and above by some positive numbers. Therefore, by making change of variables and shrinking $I_0$ if necessary, we can assume $I_0$ is an interval centered at $x_0$ such that (similar to \cite[Equation 2.2]{BNY22}),
\begin{equation}
    \label{Change of variable}
    \varphi(x)=(\varphi_1(x),\varphi_2(x),\cdots,\varphi_n(x)), \varphi_1(x)=x, \forall x\in 3^{n+1}I_0,
\end{equation} with \begin{equation}\label{U containing I_0}3^{n+1}I_0\subset U,\end{equation}
and \begin{equation}\label{eqn: lengthI_0}3|I_0|\leq\min\{\rho_0, 1\}.\end{equation}
Let $\ta$ be as in \cref{thm: main}. Define
$$\varphi^{-1}(\mathbf{Bad}_{\ta}(\mathbf{r})):=\{x\in\mathbb{R}:\liminf_{q\in\mathbb{Z}\setminus\{0\}, |q|\to\infty}\max_{1\leq i\leq n}\vert q\varphi_i(x)-\theta_i(\varphi(x))\vert_{\mathbb{Z}}^{1/r_i}   \vert q\vert>0 \}.$$
Let $\theta^{\varphi}_i:U\to\R$, be defined as  $\theta^{\varphi}_i(x):=\theta_i(\varphi(x)).$
Since each $\theta_i$ is Lipschitz on $\varphi(U)$, and $\varphi$ is analytic, we have $\theta^{\varphi}_i$ to be Lipschitz on $3^{n+1}I_0$.
We will drop the suffix $\varphi$ here onwards and write $\theta_i$ for clarity of notation.
So for every $1\leq i \leq n$ there exists $d_i>0$ such that,
\begin{equation}\label{eqn:d_i}
    |\theta_i(x)-\theta_i(y)|\leq d_i \vert x-y\vert, \forall ~ x,y\in 3^{n+1}I_0.
\end{equation}

Let us recall the definitions of the following matrices in $\mathrm{SL}_{n+1}(\R)$ as in \cite{BNY22}:
\begin{equation}
    \begin{aligned} & a(t):=\diag\{e^t,e^{-r_1t},\cdots, e^{-r_nt}\},b(t):=\diag\{e^{-t/n},e^{t},e^{-t/n}, \cdots,e^{-t/n}\},\\
    & u(\mathbf{x}):=\begin{pmatrix}
        1&\mathbf{x}\\
        0&\mathrm{I}_n
    \end{pmatrix}, \bx\in \R^n, \text{ where } \mathrm{I}_n \text{ is the identity matrix of size  } n\times n.\end{aligned}
\end{equation}
For any $\by\in\R^{n-1}$, and $x\in\R,$ let us define the following matrix in $\mathrm{SL}_{n+1}(\R),$
\begin{equation}
    \label{inho3}
    u_1(\by):=\begin{pmatrix}
        1&0&0& 0& \cdots&0\\
         0&1&y_2& y_3& \cdots&y_n\\
         \mathbf{0}&\mathbf{0} & & &\mathrm{I}_{n-1}\\
    \end{pmatrix}, ~~z(x):= u_1(\hat\varphi'(x)),
\end{equation}
where $\hat\varphi(x):=(\varphi_2(x),\cdots, \varphi_n(x)).$ By \S~ \cref{preli}, the above matrices associate to elements in $X_{n+1}$ by the equivalence map.
Let $R>0$ be a fixed large number and $\beta,\beta'>1$ be such that
\begin{equation}
\label{beta}
    e^{(1+r_1)\beta}=R=e^{(1+1/n)\beta'}.
\end{equation}
Also, let us fix $\epsilon>0$ such that \begin{equation}\label{def: epsilon}
0<\epsilon<\frac{r_n}{4n}.\end{equation}

Note that we are working with a fixed $\mu$ as in Theorem \ref{thm: twin main}, and an interval $I_0$ was chosen depending on $\mu$, as in the beginning of this subsection.  In \cite{BNY22}, a non-empty generalized Cantor set $\mathcal{K}_{\infty} $ determined by $\hat{\mathcal{J}}_q=\cup_{p=0}^q\hat{\mathcal{J}}_{p,q}$ has already been constructed within $\varphi^{-1}(\mathbf{Bad}(\mathbf{r}))\cap\supp\mu$; see \cite[Proposition 2.6]{BNY22}. We recall the construction. Suppose that
\begin{equation}
    \label{jqq}
    \hat{\mathcal{J}}_{q,q}:=\left\{I\in\mathcal{I}_{q+1},\mu(I)<(3C)^{-1}|I|^{\alpha}\right\}.
\end{equation}
Note $C, \alpha$ are the quantities associated to Ahlfors regular property of the measure $\mu$. Next, for $\forall q\geq 1$ define $\hat{\mathcal{J}}_{p,q}=\emptyset$ if $0<p\leq q/2$ or $0<p<q$ with $p\not\equiv q\pmod 4$, define $\hat{\mathcal{J}}_{0,q}$ to be the collection of $I\in\mathcal{I}_{q+1}\setminus\hat{\mathcal{J}}_{q,q}$ such that there exists $l\in\Z$ with $\max(1,q/8)\leq l\leq q/4$ satisfying 
\begin{equation}
    \label{jpq}
b(\beta'l)a(\beta(q+1))z(x)u(\varphi(x))\Z^{n+1}\notin K_{e^{-\epsilon\beta l}}\textit{ for some }x\in I,
\end{equation}
and finally if $q/2<p<q$ and $p=q-4l$ for some $l\in\Z$ define 
\begin{equation*}
    \hat{\mathcal{J}}_{p,q}:=\left\{I\in\mathcal{I}_{q+1}\setminus\left(\hat{\mathcal{J}}_{q,q}\cup\bigcup_{0\leq p'<p}\hat{\mathcal{J}}_{p',q}\right):\cref{jpq}\textit{ holds}\right\}
\end{equation*}
    
Then $\hat{\mathcal{J}}_{q}$ is as defined in \cref{jq}, $\mathcal{J}_{q+1}=\mathcal{I}_{q+1}\setminus\hat{\mathcal{J}}_{q}$ and $\mathcal{K}_{\infty}$ is as defined in \cref{K_infty}.

Then let us recall the following lemma that was proved in \cite[Equation 2.35]{BNY22}.

\begin{lemma}
\label{234}
Given $q>8$, then $\forall I\in\mathcal{J}_{q}$, we have 

\begin{equation}\label{eqn: Dani reversing b}
\forall x\in I,a(\beta q)u(\varphi(x))\Z^{n+1}\in u_1(O(1))^{-1}b(-\beta')K_{e^{-\epsilon\beta}}.
\end{equation}
\end{lemma}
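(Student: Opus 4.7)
My plan is to combine the surviving condition of $I \in \mathcal{J}_q$ with a standard commutation identity between the diagonal flow $a(\cdot)$ and the element $z(x) = u_1(\hat\varphi'(x))$.

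First I extract the $l = 1$ case of the surviving condition. Since $q > 8$ and $I \in \mathcal{J}_q$, one has $I \notin \hat{\mathcal{J}}_{q-1} = \bigcup_{p=0}^{q-1} \hat{\mathcal{J}}_{p, q-1}$. For $q = 9$, the $p = 0$ branch at level $q - 1 = 8$ has allowed range $[\max(1, 8/8), 8/4] = [1, 2]$ for $l$; since $I \notin \hat{\mathcal{J}}_{8, 8} \cup \hat{\mathcal{J}}_{0, 8}$, the condition in \eqref{jpq} must fail for $l = 1$ at every $x \in I$. For $q \ge 10$, use instead the branch $p = q - 5$ at level $q - 1$: one checks $(q-1)/2 < q - 5 < q - 1$ and $p = (q - 1) - 4 \cdot 1$; since $I$ avoids $\hat{\mathcal{J}}_{q-1, q-1}$ and every $\hat{\mathcal{J}}_{p', q-1}$ with $p' < q - 5$ as well, \eqref{jpq} with $l = 1$ must again fail for $I$. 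In either case, for every $x \in I$,
\begin{equation*}
b(\beta') a(\beta q) z(x) u(\varphi(x)) \, \mathbb{Z}^{n+1} \in K_{e^{-\epsilon \beta}}.
\end{equation*}

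Next I commute $a(\beta q)$ past $z(x) = u_1(\varphi_2'(x), \ldots, \varphi_n'(x))$. A direct matrix computation yields
\begin{equation*}
a(t) \, u_1(y_2, \ldots, y_n) \, a(-t) = u_1\!\left(e^{(r_2 - r_1)t} y_2, \, \ldots, \, e^{(r_n - r_1)t} y_n\right),
\end{equation*}
and by \eqref{weights chain} the exponents $r_j - r_1$ are all $\le 0$, so the conjugate has sup-norm no larger than the original. Applied to $\by = \hat\varphi'(x)$, which is bounded uniformly on the compact $3^{n+1} I_0$ by analyticity of $\varphi$, this gives $a(\beta q) z(x) = u_1(\by(x,q)) \, a(\beta q)$ with $\|\by(x,q)\|_\infty = O(1)$ uniformly in $q \ge 0$ and $x \in I \subset I_0$.

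Substituting this into the previous display and then multiplying on the left by $u_1(\by(x,q))^{-1} b(-\beta')$ rearranges the inclusion to exactly \eqref{eqn: Dani reversing b}. The only non-routine ingredient is the opening case analysis identifying the correct branch of the Cantor-set construction of \cite{BNY22} that supplies the $l = 1$ membership for each $q > 8$; once that is in place, the remaining commutation is immediate from the explicit forms of $a(t)$ and $u_1(\by)$.
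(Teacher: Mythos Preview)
Your argument is correct. The paper itself does not supply a proof of this lemma, merely citing \cite[Equation 2.35]{BNY22}; what you have written is essentially the natural reconstruction of that derivation: identifying which branch of the $\hat{\mathcal J}_{p,q-1}$ construction forces the $l=1$ membership for each $q>8$, and then unwinding via the commutation $a(t)u_1(\by)a(-t)=u_1\bigl(e^{(r_j-r_1)t}y_j\bigr)_{j}$, which contracts under \eqref{weights chain}. The case split ($q=9$ via $\hat{\mathcal J}_{0,8}$, $q\ge 10$ via $\hat{\mathcal J}_{q-5,q-1}$) and the verification that $I$ lies in the eligibility set for the relevant $\hat{\mathcal J}_{p,q-1}$ are both handled correctly.
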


In the above lemma, $O(1)$ is a vector in $\R^{n-1}$ whose norm is $\ll 1.$ 

We denote $h_{p,q}:=\max_{J\in\mathcal{J}_p}\#\{I\in\hat{\mathcal{J}}_{p,q}:I\subset J\},0\leq p\leq q$. The following two propositions provide upper bounds for $h_{p,q}$.

\begin{proposition}{\cite[Proposition 2.7]{BNY22}}
    Let $C,\alpha$ be as defined in \cref{thm: twin main} and $R^\alpha\geq 21C^2$, for $\forall q\in\N\cup\{0\},$
    \begin{equation}
        \label{hqq}
        h_{q,q}\leq R-(4C)^{-2}R^{\alpha}.
    \end{equation}
\end{proposition}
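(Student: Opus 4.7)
The plan is to establish, for every $J\in\mathcal{J}_q$, that at least $(4C)^{-2}R^\alpha$ of the $R$ children of $J$ in $\mathbf{Par}_R(J)$ satisfy $\mu(I)\ge (3C)^{-1}|I|^\alpha$; this is equivalent to the desired upper bound on $h_{q,q}$. First I would verify the inductive lower bound $\mu(J)\ge (3C)^{-1}|J|^\alpha$ for every $J\in\mathcal{J}_q$: for $q\ge 1$ this is automatic from the requirement $J\notin \hat{\mathcal{J}}_{q-1,q-1}$, and for $q=0$ it follows by applying the Ahlfors lower bound to the ball $B(x_0,|I_0|/2)\subset I_0$ together with $\alpha\le 1$ (so that $2^\alpha<3$), which gives $\mu(I_0)\ge C^{-1}(|I_0|/2)^\alpha\ge (3C)^{-1}|I_0|^\alpha$.

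The heart of the argument is a Vitali-type covering of $J\cap\supp\mu$. Writing $|I|=|J|/R$, I would pick a maximal $|I|$-separated subset $\{y_1,\dots,y_m\}\subset J\cap\supp\mu$. Maximality gives $J\cap\supp\mu\subset\bigcup_i B(y_i,|I|)$, and combining the Ahlfors upper bound $\mu(B(y_i,|I|))\le C|I|^\alpha$ with the lower bound $\mu(J)\ge (3C)^{-1}R^\alpha|I|^\alpha$ yields $m\ge R^\alpha/(3C^2)$. For each \emph{interior} $y_i$ (at distance $\ge |I|$ from $\partial J$), the ball $B(y_i,|I|)\subset J$ has $\mu$-mass at least $C^{-1}|I|^\alpha$ and meets at most three consecutive children of $J$; the pigeonhole principle then furnishes one of these three children with $\mu\ge(3C)^{-1}|I|^\alpha$, which is therefore large.

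The final step is to count distinct large children. At most two of the $y_i$ fail to be interior (each boundary strip of width $|I|$ in $J$ contains at most one $|I|$-separated point), and each large child's three-neighborhood, of length $3|I|$, contains at most four $|I|$-separated $y_i$, so each large child is witnessed by at most four of them. Writing $L$ for the number of large children, this gives
\[
L\ \ge\ \frac{m-2}{4}\ \ge\ \frac{R^\alpha}{12C^2}-\frac{1}{2},
\]
and under the hypothesis $R^\alpha\ge 21C^2$, together with the integrality of $L$, this forces $L\ge (4C)^{-2}R^\alpha$, completing the proof. The main challenge is the tight calibration of the Vitali separation parameter, the witness-ball radius, and the boundary correction, which are all delicately balanced so that the conclusion just holds at the threshold $R^\alpha = 21C^2$; a less careful choice of any of these constants would leave a gap between the bound obtained and the target $(4C)^{-2}R^\alpha$.
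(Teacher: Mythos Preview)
The paper does not supply its own proof of this proposition; it is quoted verbatim from \cite[Proposition~2.7]{BNY22} with no accompanying argument, so there is no in-paper proof to compare against.

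Your proof is correct and self-contained. The Ahlfors-regularity pigeonhole you describe---a maximal $|I|$-separated net $\{y_i\}$ in $J\cap\supp\mu$, a lower bound $m\ge R^\alpha/(3C^2)$ coming from the Ahlfors \emph{upper} estimate on the covering balls, and the extraction of a large child near each interior $y_i$ via the Ahlfors \emph{lower} estimate---is the standard and essentially unique mechanism for such a statement, and almost certainly coincides with the original argument in BNY22. Two minor remarks: the inequality $\alpha\le 1$ that you invoke for the base case $q=0$ is not recorded in the present paper, but it is a routine consequence of Ahlfors regularity for measures supported on $\R$; and your witness-count constant $4$ is safe but a touch loose (an \emph{open} interval of length $3|I|$ contains at most three $|I|$-separated points), so with the sharper constant $3$ the final inequality already holds for $R^\alpha\ge 96C^2/7$ and the appeal to the integrality of $L$ becomes unnecessary.
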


\begin{proposition}\cite[Propositions 3.3 and 3.4]{BNY22}\label{hpq}
    There exist constants $R_0\geq1,C_0>0$ and $\eta_0>0$ such that if $R\geq R_0$, then for any $p,q\in\N\cup\{0\}$ with $p<q$, we have 
\begin{equation}
    h_{p,q}\leq C_0R^{\alpha(1-\eta_0)(q-p+1)}.
    \end{equation}
\end{proposition}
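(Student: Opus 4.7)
The plan is to convert the interval count $h_{p,q}$ into a $\mu$-measure estimate via Ahlfors regularity, then bound that measure using a quantitative non-divergence theorem for nondegenerate curves in the space of unimodular lattices. Fix $J \in \mathcal{J}_p$ and let $h := \#\{I \in \hat{\mathcal{J}}_{p,q} : I \subseteq J\}$ with $E_J$ the union of these intervals. Because $p < q$ and the construction removes $\hat{\mathcal{J}}_{q,q}$ before $\hat{\mathcal{J}}_{p,q}$, every $I \in \hat{\mathcal{J}}_{p,q}$ satisfies the Ahlfors lower bound $\mu(I) \geq (3C)^{-1}(|I_0|R^{-(q+1)})^{\alpha}$, so
\begin{equation*}
h \cdot (3C)^{-1}(|I_0|R^{-(q+1)})^{\alpha} \leq \mu(E_J).
\end{equation*}
In the admissible regimes $E_J$ is contained in the set of $x \in J$ for which \eqref{jpq} holds for some relevant $l$: a unique $l = (q-p)/4$ when $q/2 < p < q$, and $l \in [\max(1,q/8), q/4]$ when $p = 0$.

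The key dynamical input I would rely on is a quantitative non-divergence estimate of Kleinbock--Margulis type, in the version valid for $(C,\alpha)$-Ahlfors regular measures: there exist $C_1, \delta > 0$, depending only on $\varphi, C, \alpha$, such that for every sufficiently large $t$, every $l \geq 1$, and every $\eta > 0$,
\begin{equation*}
\mu\{x \in J : b(\beta' l)\, a(t)\, z(x)\, u(\varphi(x))\, \Z^{n+1} \notin K_\eta\} \leq C_1 \eta^{\alpha \delta} \mu(J).
\end{equation*}
This should follow, as in the non-divergence arguments underlying \cite{BNY22}, from the $(C,\alpha)$-good property of the relevant polynomial compositions of $\varphi$ combined with the standard ball-covering induction; the role of $b(\beta' l)$ is to modify the system of linear forms one estimates without destroying the nondegeneracy. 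Applying this with $t = \beta(q+1)$ and $\eta = e^{-\epsilon \beta l}$, combined with the Ahlfors upper bound $\mu(J) \leq C(|I_0|R^{-p})^{\alpha}$ (valid as $|J| \leq \rho_0$) and a geometric summation over $l$ in the $p = 0$ case, yields
\begin{equation*}
\mu(E_J) \leq C_2 e^{-\alpha \delta \epsilon \beta l_{\min}} (|I_0|R^{-p})^{\alpha},
\end{equation*}
where $l_{\min}$ is the smallest admissible value of $l$.

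Combining the two inequalities and using $R = e^{(1+r_1)\beta}$ to rewrite $e^{-\alpha \delta \epsilon \beta l_{\min}} = R^{-\alpha \delta \epsilon l_{\min}/(1+r_1)}$, together with the direct case-check that $l_{\min} \geq (q-p+1)/9$ in both admissible regimes, gives
\begin{equation*}
h \leq C_3 R^{\alpha(q-p+1) - \alpha \delta \epsilon (q-p+1)/(9(1+r_1))} = C_0 R^{\alpha(1-\eta_0)(q-p+1)}
\end{equation*}
with $\eta_0 := \delta \epsilon / (9(1+r_1))$, provided $R \geq R_0$ is chosen large enough to absorb additive constants. The main obstacle is the quantitative non-divergence step itself: the auxiliary diagonal element $b(\beta' l)$ expands a direction complementary to $a$, so the single-diagonal Kleinbock--Margulis theorem does not apply verbatim. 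One must either track how the successive minima of the lattice evolve under the combined action of $b$ and $a$ to reduce to pure-$a$ non-divergence for a suitably perturbed compact set, or develop a two-parameter non-divergence statement exploiting the nondegeneracy of $\varphi$ along both flow directions. Either route ultimately rests on the analytic nondegeneracy of $\varphi$ and on the Ahlfors regularity of $\mu$ expressed through the $(C,\alpha)$-good framework.
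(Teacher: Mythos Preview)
The paper does not prove this proposition; it is quoted verbatim from \cite[Propositions~3.3 and~3.4]{BNY22} and used as a black box. So there is no ``paper's own proof'' to compare against here---only the argument in \cite{BNY22}.

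Your outline is the right skeleton and matches the strategy of \cite{BNY22}: pass from an interval count to a $\mu$-measure via the Ahlfors lower bound (legitimate, since $\hat{\mathcal{J}}_{p,q}$ for $p<q$ is by construction disjoint from $\hat{\mathcal{J}}_{q,q}$), then bound the measure of the set where \eqref{jpq} fails, and finally convert back using the Ahlfors upper bound on $J$. Your arithmetic check that $l_{\min}\gtrsim (q-p+1)$ in both the $p=0$ and the $q/2<p<q$ regimes is also correct, and the remaining cases are trivially $h_{p,q}=0$.

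The gap you yourself flag is real and is the entire content of the proposition. The displayed non-divergence inequality you write down, with a constant $C_1$ independent of $l$ and $t$, is not a direct consequence of Kleinbock--Margulis: the extra factor $b(\beta' l)$ expands in a direction not controlled by $a(t)$, so the map $x\mapsto b(\beta' l)a(t)z(x)u(\varphi(x))$ does not fit the standard $(C,\alpha)$-good template uniformly in $l$. In \cite{BNY22} this is precisely where the work lies (split across their Propositions~3.3 and~3.4, with the latter handling the $p=0$ case separately), and it is not resolved by either of the two routes you sketch in the last paragraph without substantial additional argument. So your proposal correctly isolates the heart of the matter but does not supply it; as written it is an accurate roadmap rather than a proof.
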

\section{Proof of the main theorem}\label{proof-of-main}
\begin{lemma}
    \label{bounded}
  There exists $\xi>0$ such that $\forall ~q\in\N$, ~$\forall I\in\mathcal{J}_{q},\forall x\in I$,
    \begin{equation}
        \{a(\beta t)u(\varphi(x))\Z^{n+1}, 0<t\leq q\}\subset K_{\xi}.
    \end{equation}
\end{lemma}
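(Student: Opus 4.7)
The plan is to reduce the claim to the integer-time estimate of Lemma~\ref{234} via the nested structure of the Cantor construction, and then to interpolate to arbitrary $t\in(0,q]$ using compactness of the one-parameter family $\{a(\beta s):s\in[0,1]\}$.

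Since $\mathcal{J}_{q'+1}\subset\mathbf{Par}_{R}(\mathcal{J}_{q'})$ for every $q'$, any $I\in\mathcal{J}_{q}$ is contained in a unique parent $I_{t'}\in\mathcal{J}_{t'}$ for each integer $0\leq t'\leq q$. Inspecting the derivation of \cite[Equation 2.35]{BNY22} shows that the vector denoted $O(1)$ in Lemma~\ref{234} has norm bounded by a constant $M_{1}$ depending only on $\varphi$ and $I_{0}$, and hence uniformly in $q$, $I$, $x$, and $t'$. Consequently, the set
\[
\mathcal{C}\;:=\;\bigl\{u_{1}(\mathbf{y})^{-1}b(-\beta')\Lambda:\|\mathbf{y}\|\leq M_{1},\ \Lambda\in K_{e^{-\epsilon\beta}}\bigr\}
\]
is a fixed compact subset of $X_{n+1}$ (being a continuous image of a compact product), and Lemma~\ref{234} applied to the ancestor $I_{t'}$ gives $a(\beta t')u(\varphi(x))\Z^{n+1}\in\mathcal{C}$ for every integer $t'$ with $8<t'\leq q$ and every $x\in I\subset I_{t'}$. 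By Mahler's criterion, $\mathcal{C}\subset K_{\xi_{1}}$ for some $\xi_{1}>0$.

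I will then pass to arbitrary real $t\in(9,q]$ by writing $t=t'+s$ with $t'=\lfloor t\rfloor\geq 9$ and $s\in[0,1)$. Since $a(\cdot)$ is a one-parameter subgroup, $a(\beta t)=a(\beta s)\,a(\beta t')$, so it is enough to observe that $\mathcal{A}:=\{a(\beta s):s\in[0,1]\}$ is a compact subset of $\SL_{n+1}(\R)$; then $\mathcal{A}\cdot\mathcal{C}$ is compact in $X_{n+1}$ and hence contained in some $K_{\xi_{2}}$. For the residual range $t\in(0,9]$, the set $\{a(\beta t)u(\varphi(x)):t\in(0,9],\ x\in\overline{I_{0}}\}$ is precompact in $\SL_{n+1}(\R)$ by continuity of $\varphi$ on the compact interval $\overline{I_{0}}$, so the associated lattices lie in a single $K_{\xi_{0}}$. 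Setting $\xi:=\min(\xi_{0},\xi_{2})$ then works uniformly for all $t\in(0,q]$ and $I\in\mathcal{J}_{q}$.

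The only delicate point is verifying that the bound $M_{1}$ on $\|O(1)\|$ in Lemma~\ref{234} is genuinely independent of $q$ and $I$; this is implicit in \cite{BNY22}, since the $O(1)$ term arises from a Taylor remainder for $\hat\varphi$ over an interval contained in $I_{0}$, whose norm is controlled solely by $\sup_{x\in I_{0}}\|\hat\varphi''(x)\|$ and $|I_{0}|$. With this uniformity in hand, the remainder of the argument is a straightforward application of Mahler's compactness criterion.
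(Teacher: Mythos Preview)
Your proof is correct and follows essentially the same approach as the paper's: both reduce to the integer-time bound of Lemma~\ref{234} via the ancestor chain $I\subset I_{t'}\in\mathcal{J}_{t'}$, then interpolate to real $t$ using the boundedness of $\{a(\beta s):s\in[0,1]\}$, and finally treat small $t$ separately. You make explicit two points the paper leaves implicit---the ancestor argument and the uniformity of the $O(1)$ vector---and your treatment of $t\in(0,9]$ via direct precompactness of $\{a(\beta t)u(\varphi(x)):t\in(0,9],\,x\in\overline{I_0}\}$ is a minor variant of the paper's device of writing $a(\beta q_0)=a(-\beta(9-q_0))a(9\beta)$.
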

\begin{proof}
    Note that the right-hand side of \cref{eqn: Dani reversing b} is contained in a bounded subset of $X_{n+1}$, independent of $q$ and $x$. Therefore, by \cref{234},  $\{a(\beta q_0)u(\varphi(x))\Z^{n+1}:8<q_0\leq q,q_0\in\N\}$ is in a bounded set in $X_{n+1}$, that is independent of $q$ and $x$. Moreover, when $0<q_0\leq8,$ $a(\beta q_0)u(\varphi(x))\Z^{n+1}=a(-\beta(9-q_0))a(9\beta)u(\varphi(x))\Z^{n+1}$. Since $a(9\beta)u(\varphi(x))\Z^{n+1}$ is bounded independent of $x$ by previous analysis and $1\leq 9-q_0<9$ ensures that $a(-\beta(9-q_0))$ is also bounded, we have  $\{a(\beta q_0)u(\varphi(x))\Z^{n+1},0<q_0\leq 8\}$ is in a bounded set independent of $q$ and $x$. 
    Thus, $\{a(\beta q_0)u(\varphi(x))\Z^{n+1}:q_0\leq q,q_0\in\N\}$ is in a bounded set independent of $q$ and $x$.

    If $q_0-1<t\leq q_0$ with $q_0\in\N$, then $$a(\beta t)u(\varphi(x))\Z^{n+1}= a(-\beta(q_0-t))a(\beta q_0)u(\varphi(x))\Z^{n+1}.$$ Since $0\leq q_0-t<1$, $\{a(\beta t)u(\varphi(x))\Z^{n+1}, 0<t\leq q, x\in I, I\in \mathcal{J}_q, q\in\N\}$ is bounded in $X_{n+1}$ independent of $q$ and $x$. So, by Mahler's compactness criterion, the lemma follows.

\end{proof}
Let $\xi$ be as in \cref{bounded}, and without loss of generality, we assume \begin{equation}
    \xi<\frac{n+1}{e}\label{xi}.
\end{equation} We define the following quantities:

\begin{equation}\label{def: lambda_1}\lambda_1:=\left\lceil\ln{\left(\frac{n+1}{\xi}\right)^{\frac{1}{\beta r_n}}}\right\rceil,\end{equation}
\begin{equation}
    k_1:=\frac{\xi}{n+1}e^{-\beta\lambda_1}.
    \label{def: k_1}
\end{equation}
Note that $\beta$ depends on $R$, and therefore, in the above definitions $\lambda_1\in \N$ and $k_1<1$ are dependent on $R.$
From \cref{def: lambda_1}, we have \begin{equation}\label{eqn: defn of lambda_1 gives}
e^{-\beta\lambda_1 r_i}\leq\frac{\xi}{n+1}, 1\leq i\leq n.
\end{equation}
Also, \begin{equation}\label{upper bound on k_1r_1}
    k_1^{1+r_1}<R^{-\lambda_1},
\end{equation}
since by \cref{xi}, $\frac{n+1}{\xi}\stackrel{\eqref{def: k_1}}{=}k_1^{-1}e^{-\beta\lambda_1}>1\implies k_1<e^{-\beta\lambda_1}\stackrel{\eqref{beta}}{\implies} k_1^{1+r_1}<R^{-\lambda_1}.$

We then have the following corollary. 
\begin{corollary}
\label{dual}
     $\forall q\in\N, q>\lambda_1, ~\forall ~1\leq H<e^{\beta(q-\lambda_1)},~\forall I\in\mathcal{J}_q,\forall x\in I,$  there are no non-zero integer solutions $(a_0,a_1,\cdots,a_n)$ to \begin{equation}|a_0+\sum_{i=1}^na_i\varphi_i(x)|\leq k_1H^{-1},|a_i|\leq H^{r_i},~ 1\leq i \leq n.\label{dual system}\end{equation}
\end{corollary}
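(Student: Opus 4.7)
\medskip

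\noindent\textbf{Proof plan.} The plan is to derive a contradiction from Lemma~\ref{bounded} by exhibiting, for each putative nonzero integer solution of \eqref{dual system}, a time $t \in (0,q]$ at which the lattice $a(\beta t)u(\varphi(x))\Z^{n+1}$ contains an extremely short vector, shorter than $\xi$.

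First I would spell out the candidate short vector. For integers $(a_0,\ldots,a_n)\neq 0$, the vector
\[
a(\beta t)u(\varphi(x))(a_0,a_1,\ldots,a_n)^T \;=\; \Bigl(e^{\beta t}\bigl(a_0+\textstyle\sum_{i=1}^n a_i\varphi_i(x)\bigr),\;e^{-r_1\beta t}a_1,\ldots,e^{-r_n\beta t}a_n\Bigr)^T
\]
lies in that lattice and is nonzero because $a(\beta t)u(\varphi(x))$ is invertible. The key is to pick $t$ so every coordinate is uniformly small.

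Next I would choose $t:=\lambda_1+\tfrac{\ln H}{\beta}$, i.e. $e^{\beta t}=He^{\beta\lambda_1}$. The constraint $1\le H<e^{\beta(q-\lambda_1)}$ together with $\lambda_1\in\N$ guarantees $0<\lambda_1\le t<q$, so Lemma~\ref{bounded} applies. Under \eqref{dual system}, the first coordinate has absolute value at most $e^{\beta t}\cdot k_1H^{-1}=k_1e^{\beta\lambda_1}=\xi/(n+1)$ by the definition \eqref{def: k_1} of $k_1$, while for $i\ge 1$ the $(i+1)$-th coordinate is bounded by $e^{-r_i\beta t}H^{r_i}=e^{-r_i\beta\lambda_1}\le \xi/(n+1)$ by \eqref{eqn: defn of lambda_1 gives}. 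Consequently the Euclidean norm of this nonzero lattice vector is at most $\sqrt{n+1}\cdot\tfrac{\xi}{n+1}=\tfrac{\xi}{\sqrt{n+1}}<\xi$, contradicting the containment $a(\beta t)u(\varphi(x))\Z^{n+1}\in K_\xi$ supplied by Lemma~\ref{bounded}.

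The proof is essentially a one-line unwinding once the correct value of $t$ is picked, so there is no real obstacle: the only care needed is to verify that the choices of $\lambda_1$ and $k_1$ in \eqref{def: lambda_1}–\eqref{def: k_1} were engineered precisely so that all $n+1$ entries of the transformed vector fit under the threshold $\xi/(n+1)$ simultaneously, and that the range of admissible $H$ is exactly what makes $t$ fall in the interval $(0,q]$ where Lemma~\ref{bounded} is valid.
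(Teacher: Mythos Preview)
Your proof is correct and follows essentially the same route as the paper: both argue by contradiction, pick the time so that $e^{\beta t}=He^{\beta\lambda_1}$ (the paper writes this as $t_0=\beta\lambda_1+\ln H$ and applies $a(t_0)$, which is your $a(\beta t)$), and then use \eqref{def: k_1} and \eqref{eqn: defn of lambda_1 gives} to bound every coordinate of the resulting lattice vector by $\xi/(n+1)$, contradicting Lemma~\ref{bounded}.
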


\begin{proof}
    We prove this by contradiction. Let $(a_0,a_1,\cdots,a_n)\in\Z^{n+1}\setminus \{\mathbf{0}\}$ be a solution to \cref{dual system} with a positive number \begin{equation}1\leq H<e^{\beta(q-\lambda_1)}\label{H},\end{equation} and $x\in I$ for some $I\in\mathcal{J}_q$. Then, we let 
    \begin{equation}
        \label{t_0}
        t_0:=\beta\lambda_1+\ln H,
    \end{equation}
    Thus by \cref{H} and $\lambda_1>0$,\begin{equation}\label{t_0'}0< t_0<\beta q.\end{equation}
    
    Also, let
    \begin{equation}
        \label{w}
        \bw:=(w_i)_{i=0}^n=a(t_0)u(\varphi(x))\cdot (a_0,a_1,\cdots,a_n)^T.
    \end{equation}

    Thus, we have $\vert w_0\vert =e^{\beta\lambda_1}H|a_0+\sum_{i=1}^na_i\varphi_i(x)|\stackrel{\eqref{dual system},\eqref{def: k_1}}{\leq}\frac{\xi}{n+1}$ and $\forall 1\leq i\leq n$, $\vert w_i\vert =e^{-\beta\lambda_1r_i}H^{-r_i}|a_i|\stackrel{\eqref{dual system}, \eqref{eqn: defn of lambda_1 gives}}{\leq}\frac{\xi}{n+1}$, thus $0\neq \Vert \bw\Vert=\sqrt{\sum_{i=0}^nw_i^2}<\xi$.

    This implies that there exists $x\in I,$ for some $I\in \mathcal{J}_q,$ and by \cref{t_0'}, some  $0< t_0/\beta< q$ such that 
    
    $$a(\beta t_0/\beta) u(\varphi(x))\Z^{n+1}\notin K_{\xi},$$
    which is a contradiction to \cref{bounded}.
\end{proof}
\begin{proposition}
\label{result}
    For $\forall q\in\N, q>\lambda_1, \forall~ 1\leq Q<e^{\beta(q-\lambda_1)},\forall I\in\mathcal{J}_q,\forall x\in I$, there are no non-zero integer solutions $(m,p_1,...,p_n)$ to the system
    \begin{equation}
        \label{simul system}
        |m|\leq Q, |m\varphi_i(x)-p_i|<\frac{k_1}{n}Q^{-r_i},~\forall~ 1\leq i\leq n.
    \end{equation}
\end{proposition}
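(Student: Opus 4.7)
The plan is to deduce this simultaneous statement from the dual one in \cref{dual} via Mahler's transference, \cref{transfer}. I would argue by contradiction: assume that some non-zero $(m, p_1, \ldots, p_n) \in \Z^{n+1}$ satisfies \cref{simul system} for a triple $(q, Q, x)$ meeting the hypotheses, and aim to produce a non-zero integer solution of the dual system \cref{dual system} with $H = Q$, which \cref{dual} forbids because $1 \leq Q < e^{\beta(q - \lambda_1)}$.

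Concretely, I would set up the linear forms to feed into \cref{transfer} by taking $L_0(u) = u_0$ with $T_0 = Q$, and $L_i(u) = \varphi_i(x) u_0 - u_i$ with $T_i = \frac{k_1}{n} Q^{-r_i}$ for $1 \leq i \leq n$. The coefficient matrix is block lower triangular with diagonal entries $1, -1, \ldots, -1$, so $|d| = 1$, and using $\sum_{i=1}^n r_i = 1$ one gets $\iota^n = \prod_{i=0}^n T_i = (k_1/n)^n$, hence $\iota = k_1/n$. A direct block calculation shows that this coefficient matrix is its own inverse, so the transposed forms are $L_0'(v) = v_0 + \sum_{i=1}^n \varphi_i(x) v_i$ and $L_i'(v) = -v_i$ for $1 \leq i \leq n$.

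Applying \cref{transfer} then produces a non-zero $(v_0, v_1, \ldots, v_n) \in \Z^{n+1}$ with $|L_0'(v)| \leq n\iota/T_0 = k_1/Q$ and $|L_i'(v)| \leq \iota/T_i = Q^{r_i}$ for $1 \leq i \leq n$. Setting $a_0 = v_0$, $a_i = -v_i$, and $H = Q$, these inequalities are exactly \cref{dual system}; since $1 \leq H < e^{\beta(q - \lambda_1)}$ by hypothesis, \cref{dual} supplies the contradiction.

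I do not anticipate a real obstacle; the work is essentially bookkeeping of constants. The one point that has to line up is the numerical match: the factor $n$ built into the transference bound on $L_0'$ must cancel against the $1/n$ sitting inside the $T_i$, so that the output bound on $|L_0'(v)|$ is precisely $k_1/Q$ and matches the form of the dual system in \cref{dual} rather than a weaker $k_1/(nQ)$. The $1/n$ placed inside \cref{simul system} is calibrated exactly to make this cancellation exact.
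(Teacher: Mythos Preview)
Your proposal is correct and follows exactly the same route as the paper's proof: both apply \cref{transfer} with $L_0(u)=u_0$, $L_i(u)=\varphi_i(x)u_0-u_i$, $T_0=Q$, $T_i=\frac{k_1}{n}Q^{-r_i}$, compute $\iota=k_1/n$, and feed the resulting non-zero integer vector into \cref{dual} with $H=Q$. One cosmetic slip: with your substitution $a_0=v_0$, $a_i=-v_i$, the first inequality becomes $|a_0-\sum a_i\varphi_i(x)|\le k_1H^{-1}$ rather than \cref{dual system}; simply take $a_i=v_i$ for all $i$ (as the paper does implicitly) and everything lines up.
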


\begin{proof}
   Suppose by contradiction, $(m,p_1,\cdots,p_n)$ be a non-zero integer solution to \cref{simul system} with a positive number $1\leq Q<e^{\beta(q-\lambda_1)}$ and $x\in I$ for some $I\in\mathcal{J}_q$. Let $$L_0(m,p_1,\cdots,p_n):=m,L_i(m,p_1,\cdots,p_n):=m\varphi_i(x)-p_i,~\forall~ 1\leq i\leq n.$$ The transposed system is $L_0'(v_0,v_1,...,v_n)=v_0+\sum_{i=1}^n\varphi_i(x)v_i$ and $L_i'(v_0,v_1,...,v_n)=-v_i,~\forall~ 1\leq i\leq n$. Then \cref{simul system} implies \cref{system1} with these $L_0,L_1,...,L_n$ and $T_0=Q,T_i=\frac{k_1}{n}Q^{-r_i},~\forall 1\leq i\leq n$. By \cref{transfer}, we have a non-zero integer solution $(v_0,v_1,\cdots,v_n)$ to \cref{system2} with these $L_i', ~0\leq i\leq n$ and $\iota=\frac{k_1}{n}$. 
   So $(v_0,v_1,...,v_n)$ is a non-zero integer solution to  \cref{dual system} with $H=Q$, which is a contradiction to \cref{dual}.
\end{proof}

Define a new quantity $c$,
\begin{equation}
\label{def: c}
    c:=\frac{k_1^{1+r_1}d_1}{200 n(f_0+\max_{2\leq i\leq n}d_i)(1+d_1)^2R^4},
\end{equation}
where $d_i$ is as in \eqref{eqn:d_i}, and \begin{equation}\label{def: f_0}
    f_0:=1+\sup_{x\in I_0,  i=1,\cdots, n}|\varphi_i'(x)|.
\end{equation}
We take \begin{equation}\label{R_0'} R>\frac{1}{|I_0|}.\end{equation} 

Given $m\in\N$, let us partition $I_0$ as a disjoint union of $m^{\star}:=\lceil\frac{2d_1}{c}m^{r_1}\rceil\stackrel{\eqref{def: c}}{\geq} 2$ many intervals such that for $1\leq j\leq m^\star-1,$ $\vert I_0^{j,m}\vert=\frac{c|I_0|}{2d_1 m^{r_1}}$, and $\vert I_0^{m^\star, m}\vert\leq \frac{c|I_0|}{2d_1 m^{r_1}}.$

Suppose the center of each $I_0^{j,m}$ is $x_0^{j,m}.$
For $m\in\N, 1\leq j\leq m^\star,$ let us denote $\theta_1^{j,m}:= \theta_1(x_0^{j,m}),$ and let 
\begin{equation}
\label{def: Vj}
    \mathcal{V}^{j,m}:=\left\{\frac{\mathbf{p}}{m} ~\left|~\begin{aligned}&\frac{p_1+\theta_1^{j,m}}{m}\in 3^{n+1}I_0,~ \bp\in\Z^n\text{ and }\\&\left|\varphi_i\left(\frac{p_1+\theta_1^{j,m}}{m}\right)-\frac{p_i+\theta_i\left(\frac{p_1+\theta_1^{j,m}}{m}\right)}{m}\right|<\frac{(f_0+d_i) c}{m^{1+r_i}}, \forall~ 2\leq i\leq n\end{aligned}\right.\right\}, 
\end{equation} 
and \begin{equation}\label{def V upper union}\mathcal{V}:=\bigcup_{m\in\N,m>17|I_0|^{-1}d_1}\bigcup_{j=1}^{m^\star} \mathcal{V}^{j,m}.\end{equation}
For any $\bv=\frac{\bp}{m}$, $1\leq k\leq m^\star,$ let \begin{equation}\label{def: inho_interval1}
    \Delta_{\ta}^{k,m}(\bv):=\left\{x\in I^{k,m}_0:\left|x-\frac{p_1+\theta_1(x)}{m}\right|<\frac{c}{m^{1+r_1}}\right\}. 
    \end{equation}
    
    Note that $\{\Delta_{\ta}^{k,m}(\bv)\}_{k=1}^{m^\star}$ is a collection of disjoint domains. For $\bv=\frac{\mathbf{p}}{m}\in \Q^n, 1\leq j\leq m^\star$, let

\begin{equation}
\label{def: inho_interval}
    \widetilde\Delta^{j,m}_{\ta}(\bv):=\left\{x\in I^{j,m}_0:\left|x-\frac{p_1+\theta^{j,m}_1}{m}\right|<\frac{c}{2m^{1+r_1}}\right\}.
\end{equation}
We have the following lemma showing that these intervals are comparable with the domains  
 $\Delta^{j,m}_{\ta}(\bv)$.
\begin{lemma}\label{interval inside domain}For every $m\in \N,\bp\in\Z^n$, $1\leq j\leq m^\star,$ $\bv=\frac{\bp}{m},$
$$\widetilde\Delta^{j,m}_{\ta}(\bv)\subseteq \Delta^{j,m}_{\ta}(\bv)\subseteq 4\widetilde\Delta^{j,m}_{\ta}(\bv).$$
\end{lemma}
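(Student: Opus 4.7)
The plan is to exploit the Lipschitz control of $\theta_1$ together with the fact that the subintervals $I_0^{j,m}$ are chosen very short, namely of length at most $c|I_0|/(2d_1 m^{r_1})$, so that replacing $\theta_1(x)$ by the constant $\theta_1^{j,m}=\theta_1(x_0^{j,m})$ inside the defining inequality perturbs it by an amount negligible compared with $c/m^{1+r_1}$. The two sets $\Delta^{j,m}_{\ta}(\bv)$ and $\widetilde\Delta^{j,m}_{\ta}(\bv)$ differ only through this replacement and by a factor of $2$ in the radius of the defining ball, so both inclusions should come from a single triangle inequality once the oscillation of $\theta_1$ is estimated.

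The first step will be to bound that Lipschitz oscillation. For any $x\in I_0^{j,m}$, since $x_0^{j,m}$ is the midpoint of $I_0^{j,m}$, we have $|x-x_0^{j,m}|\leq |I_0^{j,m}|/2\leq c|I_0|/(4d_1 m^{r_1})$. The Lipschitz bound \eqref{eqn:d_i} applied on $3^{n+1}I_0\supseteq I_0$ then yields $|\theta_1(x)-\theta_1^{j,m}|/m\leq c|I_0|/(4m^{1+r_1})$, and since $|I_0|\leq 1/3$ by \eqref{eqn: lengthI_0}, this is at most $c/(12m^{1+r_1})$.

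The two inclusions will then follow routinely. For $\widetilde\Delta^{j,m}_{\ta}(\bv)\subseteq \Delta^{j,m}_{\ta}(\bv)$, I would take $x$ in the former and apply the triangle inequality to obtain $|x-(p_1+\theta_1(x))/m|<c/(2m^{1+r_1})+c/(12m^{1+r_1})<c/m^{1+r_1}$. For $\Delta^{j,m}_{\ta}(\bv)\subseteq 4\widetilde\Delta^{j,m}_{\ta}(\bv)$, for $x$ in the former the triangle inequality in the reverse direction yields $|x-(p_1+\theta_1^{j,m})/m|<c/m^{1+r_1}+c/(12m^{1+r_1})<2c/m^{1+r_1}=4\cdot c/(2m^{1+r_1})$, placing $x$ in $4\widetilde\Delta^{j,m}_{\ta}(\bv)$; the condition $x\in I_0^{j,m}$ is automatic from $x\in\Delta^{j,m}_{\ta}(\bv)$.

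There is no real obstacle here; the lemma is essentially a bookkeeping check that the constant $c$ in \eqref{def: c} and the partition fineness $m^\star\approx 2d_1 m^{r_1}/c$ were tuned so that the Lipschitz fluctuation of $\theta_1$ across each $I_0^{j,m}$ is negligible relative to the target scale $c/m^{1+r_1}$. The only point where care is needed is keeping the factors $|I_0|$, $d_1$, and $m^{r_1}$ straight in the length bound and invoking $|I_0|\leq 1/3$ to absorb them into a clean numerical comparison.
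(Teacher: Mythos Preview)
Your proposal is correct and follows essentially the same route as the paper: bound the oscillation $|\theta_1(x)-\theta_1^{j,m}|/m$ on $I_0^{j,m}$ via the Lipschitz constant and the partition width, then apply the triangle inequality in each direction. The only cosmetic difference is that the paper invokes $|I_0|<1$ (from \eqref{eqn: lengthI_0}) to get the oscillation bound $c/(4m^{1+r_1})$, whereas you use the slightly sharper $|I_0|\le 1/3$ to obtain $c/(12m^{1+r_1})$; either suffices.
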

\begin{proof}

Pick any $x\in \widetilde\Delta^{j,m}_{\ta}(\bv).$ Since $x\in I^{j,m}_0,$ and $\vert I_0\vert<1$ by \eqref{eqn: lengthI_0}, $\vert x-x_0^{j,m}\vert< \frac{c}{4d_1 m^{r_1}}$.
Then $$\begin{aligned}
   \left|x-\frac{p_1+\theta_1(x)}{m}\right|&< \left|x-\frac{p_1+\theta_1^{j,m}}{m}\right|+ \left\vert \frac{\theta_1(x)-\theta_1^{j,m}}{m}\right\vert\\
   &< \frac{c}{2 m^{1+r_1}}+ d_1 \frac{c}{4d_1 m^{1+r_1}}< \frac{c}{m^{1+r_1}}.
\end{aligned}$$
Now, suppose $x\in \Delta^{j,m}_{\ta}(\bv),$
$$\begin{aligned}
   \left|x-\frac{p_1+\theta_1^{j,m}}{m}\right|&< \left|x-\frac{p_1+\theta_1(x)}{m}\right|+ \left\vert \frac{\theta_1(x)-\theta_1^{j,m}}{m}\right\vert\\
   &< \frac{c}{ m^{1+r_1}}+ d_1 \frac{c}{4d_1 m^{1+r_1}}< \frac{2c}{m^{1+r_1}}.
\end{aligned}$$

\end{proof}

Next, we have the following lemma that shows that $\Delta^{j,m}_{\ta}(\bv)$ are the \textit{dangerous domains} to avoid.
\begin{lemma}\label{avoiding inho_interval}
Let $I_0$, $\mathcal{V}^{j,m}$ and $\Delta^{j,m}_{\ta}(\bv)$ be as before. Then
\begin{equation*}
I_0\setminus\bigcup_{m> 17 \vert I_0\vert^{-1} d_1}\bigcup_{j=1}^{m^\star}\bigcup_{\bv\in\mathcal{V}^{j,m}}\Delta^{j,m}_{\ta}(\bv)\subseteq\varphi^{-1}(\mathbf{Bad}_{\ta}(\br)).
\end{equation*}
\end{lemma}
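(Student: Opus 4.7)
My plan is to prove the contrapositive: given $x \in I_0$ with $x \notin \varphi^{-1}(\mathbf{Bad}_\ta(\br))$, I will exhibit $m > 17|I_0|^{-1} d_1$, an index $j \in \{1,\ldots,m^\star\}$, and a rational $\bv \in \mathcal{V}^{j,m}$ for which $x \in \Delta^{j,m}_\ta(\bv)$. Unfolding the definition of $\varphi^{-1}(\mathbf{Bad}_\ta(\br))$, the assumption forces $\liminf = 0$, so for every $\epsilon > 0$ there exist infinitely many $q \in \Z\setminus\{0\}$ and integer vectors $\bp = (p_1,\ldots,p_n)$ (the nearest integer in each coordinate) with
\[
|q\varphi_i(x) - \theta_i(x) - p_i| < \epsilon^{r_i} |q|^{-r_i}, \qquad 1 \leq i \leq n.
\]
After possibly flipping the sign of $(q,\bp)$ I may take $q > 0$. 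I fix $\epsilon$ once and for all so small that $\epsilon^{r_1} \leq c|I_0|/4$ and $\epsilon^{r_i} < (f_0+d_i)c/2$ for every $2 \leq i \leq n$, and use the infinitude of valid $q$ to choose $m := q$ exceeding $17|I_0|^{-1}d_1$. Let $j$ be the unique index with $x \in I_0^{j,m}$ and set $\bv := \bp/m$.

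The inclusion $x \in \Delta^{j,m}_\ta(\bv)$ is then immediate: since $\varphi_1(x)=x$ by \eqref{Change of variable}, the $i=1$ Diophantine bound gives $|x - (p_1+\theta_1(x))/m| \leq \epsilon^{r_1}/m^{1+r_1} < c/m^{1+r_1}$, which is precisely \eqref{def: inho_interval1}. To verify $\bv \in \mathcal{V}^{j,m}$, I set $y := (p_1+\theta_1^{j,m})/m$. Combining the $i=1$ Diophantine bound with Lipschitzness of $\theta_1$ and the center estimate $|x - x_0^{j,m}| \leq |I_0^{j,m}|/2 \leq c|I_0|/(4d_1 m^{r_1})$ yields
\[
|y-x| \;\leq\; \frac{\epsilon^{r_1}}{m^{1+r_1}} + \frac{d_1|x-x_0^{j,m}|}{m} \;\leq\; \frac{c|I_0|}{2m^{1+r_1}} \;<\; \frac{c}{m^{1+r_1}},
\]
so that $y \in 3^{n+1}I_0$ once $m$ is as large as demanded. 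For each $2 \leq i \leq n$ I then decompose
\[
\varphi_i(y) - \frac{p_i+\theta_i(y)}{m} = [\varphi_i(y)-\varphi_i(x)] + \Bigl[\varphi_i(x) - \frac{p_i+\theta_i(x)}{m}\Bigr] + \frac{\theta_i(x)-\theta_i(y)}{m},
\]
bounding the three summands by $f_0|y-x|$, $\epsilon^{r_i}/m^{1+r_i}$, and $d_i|y-x|/m$ via the definition of $f_0$, the $i$-th Diophantine approximation, and the Lipschitz constant of $\theta_i$. Since $r_1 \geq r_i$, the exponents line up so that each summand becomes a modest multiple of $c/m^{1+r_i}$; the specific choice of $\epsilon$ is engineered precisely so that the sum is strictly below $(f_0+d_i)c/m^{1+r_i}$, matching the defining inequality in \eqref{def: Vj}.

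The only genuine difficulty is the arithmetic bookkeeping --- the constants in the definition of $\mathcal{V}^{j,m}$ must leave just enough slack to accommodate all three error pieces at once. Conceptually the argument is transparent: failing to be badly approximable produces an integer vector $\bp$ so close to $m\varphi(x) - \ta(x)$ that $\bp/m$ lands on the rational grid $\mathcal{V}^{j,m}$ and $x$ inevitably falls into its attached dangerous interval $\Delta^{j,m}_\ta(\bv)$. The key structural input is $r_1 = \max_i r_i$, which guarantees that the slack $m^{-(1+r_1)}$ from the first-coordinate approximation dominates the Lipschitz contributions arising in the remaining coordinates.
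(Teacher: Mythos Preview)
Your argument is correct and follows essentially the same contrapositive route as the paper: pick a very good rational approximation $(m,\bp)$ to $\varphi(x)$ with inhomogeneous shift $\ta(x)$, let $j$ be the index with $x\in I_0^{j,m}$, and verify both $x\in\Delta^{j,m}_\ta(\bv)$ and $\bv\in\mathcal{V}^{j,m}$ by the same three-term splitting and the Lipschitz/mean-value bounds. The only cosmetic difference is that the paper first lands $x$ in $\widetilde\Delta^{j,m}_\ta(\bv)$ and then invokes Lemma~\ref{interval inside domain}, whereas you go to $\Delta^{j,m}_\ta(\bv)$ directly; your choice of $\epsilon$ (with $\epsilon^{r_1}\le c|I_0|/4$ and $\epsilon^{r_i}<(f_0+d_i)c/2$) plays the role of the paper's fixed constant $c/4$, and the arithmetic closes in the same way.

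One small caveat: the line ``after possibly flipping the sign of $(q,\bp)$ I may take $q>0$'' is not literally valid in the inhomogeneous setting, since $(q,\bp)\mapsto(-q,-\bp)$ sends $|q\varphi_i(x)-\theta_i(x)-p_i|$ to $|q\varphi_i(x)+\theta_i(x)-p_i|$. The paper simply writes $m\in\N$ at the same step without comment, so this is a shared imprecision rather than a defect peculiar to your proof.
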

\begin{proof}
Suppose $x\notin\varphi^{-1}(\mathbf{Bad}_{\ta}(\br))$ and $x\in I_0$, then there exists $(m,\bp)\in\N\times \Z^n$, with
\begin{equation}
\label{large m}
    m> 17|I_0|^{-1}d_1,
\end{equation}
such that
\begin{equation}\label{phi1}
\left|x-\frac{p_1+\theta_1(x)}{m}\right|\stackrel{\eqref{Change of variable}}{=}\left|\varphi_1(x)-\frac{p_1+\theta_1(x)}{m}\right|<\frac{c}{4m^{1+r_1}},
\end{equation}
and 
\begin{equation}
\left|\varphi_i(x)-\frac{p_i+\theta_i(x)}{m}\right|<\frac{c}{4m^{1+r_i}},\forall i\geq 2.
\label{phii}
\end{equation}
Since $x\in I_0$, there exists $1\leq j\leq m^\star$ such that $x\in I_0^{j,m}.$
Moreover, \begin{equation}\label{eqn: constant theta near theta func}\begin{aligned}\left|x-\frac{p_1+\theta^{j,m}_1}{m}\right|&< \left\vert x-\frac{p_1+\theta_1(x)}{m}\right\vert + \left\vert\frac{\theta_1(x)-\theta_1^{j,m}}{m}\right\vert\\
 & \stackrel{\eqref{phi1}}{<}\frac{c}{4m^{1+r_1}}+ \frac{d_1}{m} \vert x- x_0^{j,m}\vert\\
 & <\frac{c}{4m^{1+r_1}}+ d_1 \frac{c}{4d_1 m^{1+r_1}}=  \frac{c}{2m^{1+r_1}}.\end{aligned}\end{equation}
On combining with \cref{large m}, \cref{def: c}, \cref{R_0'} and $x\in I_0$, \cref{eqn: constant theta near theta func} implies $\frac{p_1+\theta^{j,m}_1}{m}\in 3^{n+1}I_0. $
For $\forall~ i\geq 2,$
\begin{equation*}
\begin{aligned}
   &\left\vert\varphi_i\left(\frac{p_1+\theta^{j,m}_1}{m}\right)-\frac{p_i+\theta_i\left(\frac{p_1+\theta^{j,m}_1}{m}\right)}{m}\right\vert\\
    & \leq\left|\varphi_i\left(\frac{p_1+\theta^{j,m}_1}{m}\right)-\varphi_i(x)\right|+\left|\varphi_i(x)-\frac{p_i+\theta_i\left(\frac{p_1+\theta^{j,m}_1}{m}\right)}{m}\right|\\
    &\leq\left|\varphi_i\left(\frac{p_1+\theta^{j,m}_1}{m}\right)-\varphi_i(x)\right|+\left|\varphi_i(x)-\frac{p_i+\theta_i(x)}{m}\right|+\\& \hspace{2.2 cm}\frac{1}{m} \left\vert \theta_i(x)-\theta_i\left(\frac{p_1+\theta^{j,m}_1}{m}\right)\right\vert.
\end{aligned}\end{equation*}
Therefore, by \cref{phii} and mean value theorem,
\begin{equation}\label{eqn: second inequality}
\begin{aligned}
   &\left\vert\varphi_i\left(\frac{p_1+\theta^{j,m}_1}{m}\right)-\frac{p_i+\theta_i\left(\frac{p_1+\theta^{j,m}_1}{m}\right)}{m}\right\vert\\
    &<(f_0-1)\left|x-\frac{p_1+\theta^{j,m}_1}{m}\right|+\frac{c}{4 m^{1+r_i}}+ \\ & \hspace{ 2.2 cm}\frac{d_i}{m} \left\vert x-\frac{p_1+\theta_1^{j,m}}{m}\right\vert \\
    &\stackrel{\eqref{eqn: constant theta near theta func}}{<}(f_0-1)\frac{c}{m^{1+r_1}}+\frac{c}{m^{1+r_i}}+d_i \frac{c}{m^{1+r_1}}\\
    &\stackrel{\eqref{weights chain}}{\leq}\frac{(f_0+d_i)c}{m^{1+r_i}}.
\end{aligned}\end{equation}
By \cref{eqn: second inequality}, $\bv:=\frac{\mathbf{p}}{m}\in\mathcal{V}^{j,m}$ and by \cref{eqn: constant theta near theta func}, $x\in\widetilde\Delta^{j,m}_{\ta}(\bv)$. By \cref{interval inside domain}, $x\in\Delta^{j,m}_{\ta}(\bv)$ which completes the proof.
\end{proof}

For $q\in\N$, we define 
\begin{equation}
\label{def: V_n}
    \mathcal{V}_q:=\left\{\bv=\frac{\mathbf{p}}{m}\in\mathcal{V}, 2c|I_0|^{-1}R^{q+1}\leq m^{1+r_1}<2c|I_0|^{-1}R^{q+2}\right\},
\end{equation} and $\mathcal{V}_0:=\emptyset.$ 
Note that 
\begin{equation}\mathcal{V}_q=\emptyset, ~~\forall~~ 0\leq q\leq \lambda_1+1, q\in\N.\label{empty}\end{equation} This is because if $\frac{\bp}{m}\in\mathcal{V}_q,$ \begin{equation}\label{upper bound on m}
1\leq m\stackrel{\eqref{R_0'}, \eqref{def: c}}{<}\left(\frac{k_1^{1+r_1}}{n}R^{q-1}\right)^{\frac{1}{1+r_1}}<e^{\beta (q-1-\lambda_1)}.\end{equation} 
\noindent By \cref{beta}, the last inequality holds if $\frac{k_1^{1+r_1}}{n } < R^{-\lambda_1}$, which is true by \cref{upper bound on k_1r_1}. Now \cref{upper bound on m} implies for $q\leq \lambda_1+1$, there is no such $m$, hence $\mathcal{V}_q=\emptyset.$ 
Then by definition, we have
\begin{equation}
    \bigcup_{q\in\N }\mathcal{V}_q=\mathcal{V}.
    \label{eqn: union}
\end{equation}
Also, let us denote 
$\mathcal{V}_q^{j,m}:=\mathcal{V}_q\cap \mathcal{V}^{j,m}.$ Then by \cref{eqn: union}, \begin{equation}\label{union_sublevel}\bigcup_{q\in\N} \mathcal{V}_q^{j,m}=\mathcal{V}^{j,m}.\end{equation}
Then we have the following two propositions, which are crucial to constructing Cantor set later.

\begin{proposition}

Suppose $\bv=\frac{\bp}{m}\in\mathcal{V}$. Then, there are at most two $1\leq j\leq m^\star$ such that $\Delta^{j,m}_{\ta}(\bv)\neq\emptyset$.

\label{key1}
\end{proposition}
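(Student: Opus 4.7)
The plan is to reduce the statement to a single diameter estimate: I would show that any two points $x, y$ lying in (possibly different) non-empty $\Delta^{j,m}_{\ta}(\bv)$'s satisfy $|x-y| < \ell$, where $\ell := c|I_0|/(2 d_1 m^{r_1})$ is the common length of the subintervals $I_0^{j,m}$ for $1 \leq j \leq m^\star - 1$. Once this is established, the geometry of the disjoint partition $\{I_0^{j,m}\}_{j=1}^{m^\star}$ forces such indices to be consecutive, so at most two $j$'s can admit a nonempty $\Delta^{j,m}_{\ta}(\bv)$.

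To carry this out, I would first introduce $g(x) := x - (p_1 + \theta_1(x))/m$, so that $\Delta^{j,m}_{\ta}(\bv) = \{x \in I_0^{j,m} : |g(x)| < c/m^{1+r_1}\}$. The Lipschitz bound \eqref{eqn:d_i} on $\theta_1$ yields
\begin{equation*}
|g(x) - g(y)| \geq (1 - d_1/m)\,|x - y| \qquad \text{for all } x, y \in 3^{n+1} I_0.
\end{equation*}
Since $\bv \in \mathcal{V}$ forces $m > 17 d_1/|I_0|$ by \eqref{def V upper union}, in particular $m > 2 d_1$, giving $|g(x) - g(y)| \geq |x-y|/2$. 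Any $x \in \Delta^{j,m}_{\ta}(\bv)$ and $y \in \Delta^{k,m}_{\ta}(\bv)$ therefore satisfy $|x-y| \leq 2(|g(x)| + |g(y)|) < 4c/m^{1+r_1}$.

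Finally, I would compare $4c/m^{1+r_1}$ with $\ell$: their ratio equals $8 d_1/(|I_0|\,m)$, which is smaller than $1$ once more thanks to $m > 17 d_1/|I_0|$. Hence $|x-y| < \ell$. On the other hand, if $j < k$ then the intermediate intervals $I_0^{j+1,m}, \dots, I_0^{k-1,m}$ each have length $\ell$, so $y - x \geq (k-j-1)\ell$; combined with $|x-y| < \ell$ this forces $k \leq j+1$, completing the proof. The only thing requiring attention is to verify that the single numerical threshold $m > 17 d_1/|I_0|$ built into the definition of $\mathcal{V}$ dominates both $2 d_1$ and $8 d_1/|I_0|$; this is immediate from $|I_0| \leq 1/3$ (a consequence of \eqref{eqn: lengthI_0}), so there is no real obstacle beyond careful bookkeeping of constants.
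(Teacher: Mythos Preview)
Your proof is correct and follows essentially the same approach as the paper's. Both arguments exploit the Lipschitz bound on $\theta_1$ to show that any two points $x,y$ lying in nonempty $\Delta^{j,m}_{\ta}(\bv)$'s satisfy $|x-y|<\ell$ (the paper in fact gets $|x-y|<\ell/4$, using $m-d_1>16d_1/|I_0|$ rather than your cruder $1-d_1/m>1/2$), and then conclude from the partition geometry that at most two of the $I_0^{j,m}$ can be hit; your introduction of $g(x)=x-(p_1+\theta_1(x))/m$ just packages the same inequality a bit more explicitly.
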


\begin{proof}

Suppose $1\leq j_1\neq j_2\neq \cdots \neq j_k\leq m^\star$ such that for $1\leq s \leq k$, $\Delta_{\ta}^{j_s,m}(\bv)\neq\emptyset.$ Specifically,
\begin{equation}
\label{inho_10}
    \forall 1\leq s \leq k,~ \exists~ x_{j_s}\in I^{j_s,m}_0 
    \textit{such that }\left|x_{j_s}-\frac{p_{1}+\theta_{1}(x_{j_s})}{m}\right|<\frac{c}{m^{1+r_1}}.
\end{equation}
Hence for any $1\leq i,i'\leq k$, $$\begin{aligned}\vert x_{j_i}-x_{j_{i'}}\vert &\leq \sum_{s=i,i'}\left\vert x_{j_s}-\frac{p_1+\theta_1(x_{j_s})}{m}\right\vert +\frac{1}{m}\vert \theta_1(x_{j_i})-\theta_1(x_{j_{i'}})\vert\\
 & \leq \frac{2c}{m^{1+r_1}}+ \frac{d_1}{m}\vert x_{j_i}-x_{j_{i'}}\vert.
 \end{aligned}$$ Since $m\in \mathcal{V},$ $m-d_1>0.$ Hence we  have \begin{equation}\label{eqn99}
     \vert x_{j_i}-x_{j_{i'}}\vert \leq \frac{2c}{m^{r_1}(m-d_1)}.
 \end{equation}  Moreover, by $|I_0|<1$ 
 and \cref{def V upper union}, we have $m-d_1>16|I_0|^{-1}d_1$. Thus, by \eqref{eqn99},
  $\vert x_{j_i}-x_{j_{i'}}\vert < \frac{c|I_0|}{8d_1 m^{r_1}}.$ 
 This together with the fact that for any $1\leq s\leq k$, $x_{j_s}\in I_0^{j_s,m}$, implies that there are at most two such $s$ satisfying \cref{inho_10}. 
\end{proof}

\begin{proposition}
$\forall q\in\N,~\forall I\in\mathcal{J}_{q-1}$, there will be at most one $\bv=\frac{\bp}{m}\in\mathcal{V}_q$ such that for some $1\leq j\leq m^\star$, $\bv\in\mathcal{V}^{j,m}$ and $\Delta^{j,m}_{\ta}(\bv)\cap I\neq\emptyset$.
\label{key0}
\end{proposition}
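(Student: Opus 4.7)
The plan is a proof by contradiction. Assume there exist two distinct elements $\bv_1 = \bp/m$ and $\bv_2 = \bp'/m'$ in $\mathcal{V}_q$ with $\bv_\ell \in \mathcal{V}^{j_\ell, m_\ell}$ and $\Delta^{j_\ell, m_\ell}_\ta(\bv_\ell) \cap I \neq \emptyset$ for $\ell = 1, 2$ (writing $m_1 = m$, $m_2 = m'$), and, without loss of generality, $m \geq m'$. Pick $x_\ell \in \Delta^{j_\ell, m_\ell}_\ta(\bv_\ell) \cap I$ and form the integer vector
\[
(M, \mathbf{P}) := (m - m', \bp - \bp') \in \Z^{n+1} \setminus \{\mathbf{0}\}
\]
(which is nonzero because $\bv_1 \neq \bv_2$). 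I will show that $(M, \mathbf{P})$ satisfies the inequalities of \cref{result} at the point $x_1 \in I \in \mathcal{J}_{q-1}$ with $Q = m$, and then invoke \cref{result} with $q - 1$ in place of $q$ to reach the desired contradiction.

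The key algebraic observation is that the inhomogeneous shifts cancel in the difference:
\[
M\varphi_i(x_1) - P_i \;=\; \bigl(m\varphi_i(x_1) - p_i - \theta_i(x_1)\bigr) - \bigl(m'\varphi_i(x_1) - p'_i - \theta_i(x_1)\bigr),
\]
turning the inhomogeneous data into a homogeneous estimate evaluated at the single point $x_1$. The first bracket is small because $x_1 \in \Delta^{j_1, m}_\ta(\bv_1)$ and $\bv_1 \in \mathcal{V}^{j_1, m}$: using \cref{interval inside domain} to place $x_1$ within $2c/m^{1+r_1}$ of $(p_1 + \theta_1^{j_1, m})/m$, the mean value theorem for $\varphi_i$, and the Lipschitz bound \cref{eqn:d_i} on $\theta_i$, it is controlled by a multiple of $(f_0 + d_i) c / m^{r_i}$. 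The second bracket is analogous at $x_2$ and must be transported back to $x_1$; this transport costs at most $(m'f_0 + d_i)|x_1 - x_2| \leq (m'f_0 + d_i)|I_0| R^{-(q-1)}$. By the defining bound $(m')^{1+r_1} < 2c|I_0|^{-1} R^{q+2}$ of $\mathcal{V}_q$, this error scales as $c R^3$ after weighting by $m^{r_i}$, and the choice of $c$ in \cref{def: c} (with its $R^{-4}$ denominator) is precisely engineered so that the total sits strictly below $k_1/(n m^{r_i})$ for every $i$.

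To close the argument, the admissibility $Q < e^{\beta(q-1-\lambda_1)}$ required by \cref{result} follows on combining $m^{1+r_1} < 2c|I_0|^{-1} R^{q+2}$ with \cref{upper bound on k_1r_1} and \cref{R_0'}: these force $2c|I_0|^{-1} < R^{-(3+\lambda_1)}$, and hence $m < e^{\beta(q-1-\lambda_1)}$. Since \cref{empty} guarantees $q > \lambda_1 + 1$, we have $q - 1 > \lambda_1$, so \cref{result} applies and forces $(M, \mathbf{P}) = \mathbf{0}$, contradicting $\bv_1 \neq \bv_2$. A pleasant feature is that the potentially delicate case $m = m'$ is absorbed into the same argument: then $M = 0$, and the same estimates force each $|P_i|$ to be a non-negative integer strictly below $k_1/(n m^{r_i}) < 1$, so $\mathbf{P} = \mathbf{0}$ and again $\bv_1 = \bv_2$. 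The main obstacle I anticipate is the quantitative bookkeeping of the transport error $(m'f_0 + d_i)|x_1 - x_2|$: its smallness rests on the designed-in compatibility between the denominator range $m^{1+r_1} \asymp R^{q+1}$ of $\mathcal{V}_q$, the length $|I| = |I_0| R^{-(q-1)}$ of level-$(q-1)$ Cantor intervals, and the $R^{-4}$ factor in \cref{def: c}.
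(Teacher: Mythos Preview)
Your proposal is correct and follows essentially the same route as the paper: assume two distinct $\bv_1,\bv_2\in\mathcal{V}_q$ meet $I\in\mathcal{J}_{q-1}$, form the integer difference $(m_1-m_2,\bp_1-\bp_2)$, and use the membership conditions $\bv_s\in\mathcal{V}^{j_s,m_s}$, $x_s\in\Delta_{\ta}^{j_s,m_s}(\bv_s)$, the mean value theorem, the Lipschitz bound on $\theta_i$, and the scale compatibility encoded in \cref{def: c} and \cref{def: V_n} to show this vector solves \cref{simul system}, contradicting \cref{result}. The only cosmetic difference is that you take $Q=m$ (the larger denominator) whereas the paper takes $Q=m_1-m_2$; since the paper immediately bounds $(m_1-m_2)^{r_i}\le m_1^{r_i}$ anyway, the estimates are identical, and your choice has the small advantage of absorbing the case $m_1=m_2$ into the same application of \cref{result} rather than treating it separately.
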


\begin{proof}
Since by \cref{empty}, $\mathcal{V}_q=\emptyset$, for $q\leq \lambda_1+1$, the proposition is vacuously true in this range of $q$. Thus, we prove the proposition for $q\geq \lambda_1+2$. Suppose, by contradiction, there are $q\geq \lambda_1+2$, $I\in\mathcal{J}_{q-1}$, and two such distinct $\bv_s=\frac{\bp_s}{m_s}\in\mathcal{V}_q$, $\bp_{s}=(p_{i,s})_{i=1}^n, s=1,2$, for some $1\leq j_s\leq m_s^\star,$ $\bv_s\in\mathcal{V}^{j_s,m_s}$ and there exists $x_s\in\Delta_{\ta}^{j_s,m_s}(\bv_s)\cap I\neq\emptyset$. 
In particular,
\begin{equation}
\label{inho_1}
    \forall s=1,2,~ \exists~ x_s\in I\cap I^{j_s,m_s}_0, \textit{such that }\left|x_s-\frac{p_{1,s}+\theta_{1}(x_s)}{m_s}\right|<\frac{c}{m_s^{1+r_1}},
\end{equation}
and 
\begin{equation}
\label{inho_2}
    \forall i\geq 2,~ \left|\varphi_i\left(\frac{p_{1,s}+\theta^{j_s,m_s}_1}{m_s}\right)-\frac{p_{i,s}+\theta_i\left(\frac{p_{1,s}+\theta^{j_s,m_s}_1}{m_s}\right)}{m_s}\right|<\frac{(f_0+d_i)c}{m_s^{1+r_i}}.
\end{equation}
Without loss of generality,
\begin{equation}
    m_1\geq m_2. 
    \label{assume}
\end{equation}
Since $x_1,x_2\in I$, we have
\begin{equation}
    |x_1-x_2|<|I|=|I_0|R^{-q+1}.
    \label{diam}
\end{equation}
Since $\theta_1$ is Lipschitz, and $x_s\in I_0^{j_s,m_s}$, by \cref{inho_1},  \begin{equation}\label{eqn: shifted rational near xs }
\forall ~s=1,2,~\left|x_s-\frac{p_{1,s}+\theta_{1}^{j_s,m_s}}{m_s}\right|<\frac{5c}{4m_s^{1+r_1}}.
\end{equation}
Then we have,
\begin{equation}\label{ho1}
\begin{aligned}
    &|(m_1-m_2)x_1-(p_{1,1}-p_{1,2})|\\
    &\leq m_2|x_1-x_2|+\sum_{s=1,2}m_s\left |x_s-\frac{p_{1,s}+\theta_1(x_s)}{m_s}\right|+\vert \theta_1(x_1)-\theta_1(x_2)\vert \\
    &\stackrel{\eqref{diam},\eqref{inho_1}}{<}m_2|I_0|R^{-q+1}+\sum_{s=1,2}\frac{c}{m_s^{r_1}}+ d_1 |I_0|R^{-q+1}\\
    &\stackrel{\eqref{assume}}{\leq}m_2(1+d_1)|I_0|R^{-q+1}+\frac{2c}{m_2^{r_1}}.
\end{aligned}
\end{equation}
Note that since $\frac{\bp_2}{m_2}\in\mathcal{V}_q$, $m_2(1+d_1)|I_0|R^{-q+1}< 2c(1+d_1)R^{3}\stackrel{\eqref{def: c}}{<}\frac{1}{2},$ and $\frac{2c}{m_2^{r_1}}<\frac{1}{2},$ this makes the right side of \eqref{ho1} $<1.$ 

\medskip
Moreover, for $\forall i\geq 2,$ 
we can write $\left|(m_1-m_2)\varphi_i\left(\frac{p_{1,1}+\theta^{j_1,m_1}_1}{m_1}\right) - \left.(p_{i,1}-p_{i,2})\right.\right|$
\begin{align}
    & \leq  m_2 \left|\varphi_i\left(\frac{p_{1,1}+\theta^{j_1,m_1}_1}{m_1}\right)-\varphi_i\left(\frac{p_{1,2}+\theta^{j_2,m_2}_1}{m_2}\right)\right|\nonumber\\ &+\left|\theta_i\left(\frac{p_{1,1}+\theta^{j_1,m_1}_1}{m_1}\right)-\theta_i\left(\frac{p_{1,2}+\theta^{j_2,m_2}_1}{m_2}\right)\right|\nonumber\\ &+\sum_{s=1,2}m_s\left|\varphi_i\left(\frac{p_{1,s}+\theta^{j_s,m_s}_1}{m_s}\right)-\frac{p_{i,s}+\theta_i\left(\frac{p_{1,s}+\theta^{j_s,m_s}_1}{m_s}\right)}{m_s}\right|\nonumber\\\stackrel{MVT, \eqref{inho_2}}{<}& m_2(f_0+d_i)\left(|x_1-x_2|+\sum_{s=1,2}\left|x_s-\frac{p_{1,s}+\theta^{j_s,m_s}_1}{m_s}\right|\right)\nonumber\\&+\sum_{s=1,2}\frac{(f_0+d_i)c}{m_s^{r_i}}\nonumber\\
    \stackrel{\eqref{eqn: shifted rational near xs }, \eqref{diam}}{\leq}&\frac{5(f_0+d_i)c}{m_2^{r_i}}+m_2(f_0+d_i)|I_0|R^{-q+1}\nonumber.
\end{align}
Thus, we get
\begin{equation}\label{ho2}
    \begin{aligned}&\left|(m_1-m_2)\varphi_i\left(\frac{p_{1,1}+\theta^{j_1,m_1}_1}{m_1}\right) - \left.(p_{i,1}-p_{i,2})\right.\right|\\& \leq\frac{5(f_0+d_i)c}{m_2^{r_i}}+m_2(f_0+d_i)|I_0|R^{-q+1}.
\end{aligned}\end{equation}
Using \cref{def: c}, we derive that the right side of \eqref{ho2} is $<1.$
Now note, we may assume $m_1>m_2$. Otherwise, as the right sides of \cref{ho1}, and \cref{ho2} are $<1$, $\bp_1=\bp_2$, giving $\bv_1=\bv_2,$ that contradicts our assumption.
Next, we claim that 
\begin{equation}\label{sol1}
  (m_1-m_2)^{r_1}|(m_1-m_2)x_1-(p_{1,1}-p_{1,2})|<\frac{k_1}{n},
\end{equation}
and for $2\leq i\leq n,$
\begin{equation}\label{sol2}
(m_1-m_2)^{r_i}|(m_1-m_2)\varphi_i(x_1)-(p_{i,1}-p_{i,2})|<\frac{k_1}{n}.
\end{equation}
First, we show that \cref{sol1} is true,
\begin{equation*}
\begin{aligned}
(m_1-m_2)^{r_1}|(m_1-m_2)x_1-(p_{1,1}-p_{1,2})|&\stackrel{\eqref{ho1},\eqref{assume}}{<}m_1^{1+r_1}(1+d_1)|I_0|R^{-q+1}+\frac{2cm_1^{r_1}}{m_2^{r_1}}\\&\stackrel{\eqref{def: V_n}}{<}2c(1+d_1)R^3+2cR\stackrel{\eqref{def: c}}{<}\frac{k_1}{n}.
\end{aligned}
\label{abc1}
\end{equation*}
Next, we show that \cref{sol2} holds.
\begin{equation*}
\label{abc}
    \begin{aligned}
        &(m_1-m_2)^{r_i}|(m_1-m_2)\varphi_i(x_1)-(p_{i,1}-p_{i,2})|\\
        &\leq (m_1-m_2)^{1+r_1}\left|\varphi_i(x_1)-\varphi_i\left(\frac{p_{1,1}+\theta^{j_1,m_1}_1}{m_1}\right)\right|\\  & \hspace{2 cm}+(m_1-m_2)^{r_i}\left|(m_1-m_2)\varphi_i\left(\frac{p_{1,1}+\theta^{j_1,m_1}_1}{m_1}\right)-(p_{i,1}-p_{i,2})\right|\\
        &\stackrel{\eqref{assume},\eqref{ho2},\eqref{eqn: shifted rational near xs },MVT}{\leq}\frac{5 f_0 m_1^{1+r_1} c}{4m_1^{1+r_1}}+\frac{ 5(f_0+d_i)cm_1^{r_i}}{m_2^{r_i}}+ m_1^{1+r_i}(f_0+d_i)|I_0|R^{-q+1}
        \\
        & \hspace{1.5 cm}= \frac{5}{4}f_0c + \frac{ 5 (f_0+d_i)cm_1^{r_i}}{m_2^{r_i}}+ m_1^{1+r_i}(f_0+d_i)|I_0|R^{-q+1}\\
        &\hspace{1.5 cm} \stackrel{\eqref{assume}}{\leq}\frac{10(f_0+d_i)cm_1^{r_i}}{m_2^{r_i}}+m_1^{1+r_i}(f_0+d_i)|I_0|R^{-q+1}
        \\& \hspace{1.2 cm}\stackrel{\eqref{def: V_n},\eqref{weights chain}}{<}10(f_0+d_i)cR+2(f_0+d_i)cR^3\stackrel{\eqref{def: c}}{<}\frac{k_1}{n}.
    \end{aligned}
\end{equation*}
We also have,
\begin{equation*}1\leq m_1-m_2\leq m_1\stackrel{\eqref{upper bound on m}}{<}e^{\beta (q-1-\lambda_1)}.\end{equation*} 
Since $m_1>m_2$, by \cref{Change of variable}, \cref{sol1} and \cref{sol2} the integer vector $(m_1-m_2,\bp_1-\bp_2)$ is a non-zero solution to the system of equations \cref{simul system} with $Q=m_1-m_2<e^{\beta(q-1-\lambda_1)}$, $q\geq\lambda_1+2, \text{ and }I\in\mathcal{J}_{q-1}, x\in I$, which leads to a contradiction to \cref{result}.  

\end{proof}
Combining \cref{key1} and \cref{key0}, we have the following key proposition.
\begin{proposition}
$\forall q\in\N,~\forall I\in\mathcal{J}_{q-1}$, there will be at most one $\bv=\frac{\bp}{m}\in\mathcal{V}_q$ and at most two $1\leq j\leq m^\star$ such that $\bv\in\mathcal{V}^{j,m}$ and $\Delta^{j,m}_{\ta}(\bv)\cap I\neq\emptyset$. 
\label{key}
\end{proposition}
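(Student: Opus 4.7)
The plan is to prove Proposition~\ref{key} by directly combining the two preceding propositions, Proposition~\ref{key1} and Proposition~\ref{key0}, with essentially no additional work beyond an intersection-of-conditions argument. The statement has two distinct quantitative claims bundled together: an ``at most one $\bv$'' claim and an ``at most two $j$'' claim, and each of these is handled by one of the two earlier propositions.

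First, I would fix $q \in \N$ and $I \in \mathcal{J}_{q-1}$, and consider the set of pairs $(\bv, j)$ with $\bv = \bp/m \in \mathcal{V}_q$, $1 \leq j \leq m^\star$, $\bv \in \mathcal{V}^{j,m}$, and $\Delta^{j,m}_\ta(\bv) \cap I \neq \emptyset$. By Proposition~\ref{key0}, any two such pairs must share the same $\bv$, because the nonemptiness condition $\Delta^{j,m}_\ta(\bv) \cap I \neq \emptyset$ for \emph{some} $j$ already forces uniqueness of $\bv \in \mathcal{V}_q$. This disposes of the first half of the statement.

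Next, assuming such a $\bv$ exists (otherwise the conclusion is vacuous), I would invoke Proposition~\ref{key1} applied to this $\bv \in \mathcal{V}_q \subseteq \mathcal{V}$. Since $\Delta^{j,m}_\ta(\bv) \cap I \neq \emptyset$ implies $\Delta^{j,m}_\ta(\bv) \neq \emptyset$, the indices $j$ for which the intersection is nonempty form a subset of the indices for which $\Delta^{j,m}_\ta(\bv)$ is nonempty, and Proposition~\ref{key1} bounds the latter by two. This yields the second half of the statement.

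Since the real content of Proposition~\ref{key} is already contained in Propositions~\ref{key1} and \ref{key0}, there is no substantive obstacle here; the only care needed is to note that membership in $\mathcal{V}_q$ implies membership in $\mathcal{V}$, so that Proposition~\ref{key1} applies, and that the uniqueness in Proposition~\ref{key0} is taken over all $j$ simultaneously rather than $j$-by-$j$. The combined statement is exactly what is needed in the subsequent Cantor set construction, where for each interval $I$ of the previous level one needs a uniform bound (namely two) on the total number of dangerous subintervals $\Delta^{j,m}_\ta(\bv)$ arising from scale $q$ rationals that can meet $I$.
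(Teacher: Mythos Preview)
Your proposal is correct and matches the paper's approach exactly: the paper simply states that Proposition~\ref{key} follows by combining Propositions~\ref{key1} and~\ref{key0}, and your argument spells out precisely this combination, using Proposition~\ref{key0} for the uniqueness of $\bv$ and Proposition~\ref{key1} (via $\mathcal{V}_q\subseteq\mathcal{V}$) for the bound of two on the indices $j$.
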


In what follows, we take $p,q\in\N\cup\{0\}.$
For any $q\geq 0$, we define,

\begin{equation}
\label{def: M}
    \mathcal{M}_{q,q+1}:=\left\{I\in\mathcal{I}_{q+2}:\exists~\bv=\frac{\bp}{m}\in\mathcal{V}_{q+1}, \left|  \begin{aligned} &~\bv\in\mathcal{V}^{j,m} \text{ and }\\ &~\Delta_{\ta}^{j,m}(\bv)\cap I\neq\emptyset\end{aligned} \right. \text{ for some } 1\leq j\leq m^\star\right\}.
\end{equation}
and let $\mathcal{M}_{p,q}:=\emptyset, 0\leq p\leq q, p\neq q-1$.
Next, for $0\leq p\leq q,$ we define 
$$f_{p,q}:=\max_{J\in\mathcal{J}_{p}}\#\{I\in\mathcal{M}_{p,q}: I\subset J\}.$$
Trivially,
\begin{equation}
    \label{f0q}
    f_{q,q}=0.
\end{equation}
And for $p<q,$ we have the following proposition.
\begin{proposition}
\label{fpq}
    For all $p\geq 0,q\geq 1$ with $p< q$, we have $$f_{p,q}\leq 6 .$$ 
\end{proposition}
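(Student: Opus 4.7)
My plan is first to reduce to the only nontrivial case. By the definition immediately before the proposition, $\mathcal{M}_{p,q}=\emptyset$ whenever $p\neq q-1$, hence $f_{p,q}=0$ in those cases and the bound $f_{p,q}\le 6$ is automatic. So the task reduces to showing $f_{q-1,q}\le 6$ for every $q\ge 1$, i.e.\ fixing any $J\in\mathcal{J}_{q-1}$ and bounding by $6$ the number of intervals $I\in\mathcal{I}_{q+1}$ with $I\subset J$ that lie in $\mathcal{M}_{q-1,q}$.

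The first key input is \Cref{key}: applied with $q$ in place of the proposition's $q$ and with the single $J\in\mathcal{J}_{q-1}$ in place of $I$, it gives at most one rational $\bv=\bp/m\in\mathcal{V}_q$ and at most two indices $1\le j\le m^\star$ with $\bv\in\mathcal{V}^{j,m}$ for which $\Delta^{j,m}_{\ta}(\bv)\cap J\neq\emptyset$. Since by the definition of $\mathcal{M}_{q-1,q}$ every interval counted in $f_{q-1,q}$ intersects some such $\Delta^{j,m}_{\ta}(\bv)$, the counting reduces to: for a fixed $\bv$ and a fixed $j$, how many sub-intervals of $J$ in the $(q+1)$-level partition can $\Delta^{j,m}_{\ta}(\bv)$ meet?

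This is a straightforward length comparison. By \Cref{interval inside domain}, $\Delta^{j,m}_{\ta}(\bv)\subseteq 4\widetilde\Delta^{j,m}_{\ta}(\bv)$, which is an interval of diameter at most $\tfrac{4c}{m^{1+r_1}}$. Since $\bv\in\mathcal{V}_q$, the defining inequality $m^{1+r_1}\ge 2c|I_0|^{-1}R^{q+1}$ forces
\[
\operatorname{diam}\Delta^{j,m}_{\ta}(\bv)\;\le\;\frac{4c}{m^{1+r_1}}\;\le\;\frac{2|I_0|}{R^{q+1}}\;=\;2\,|I|,
\]
where $|I|=|I_0|R^{-(q+1)}$ is the common length of any $I\in\mathcal{I}_{q+1}$ by \cref{length}. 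An interval of length $\le 2|I|$ meets at most three consecutive elements of the regular $R$-partition inside $J$, so for each admissible pair $(\bv,j)$ there are at most $3$ relevant $I$'s.

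Combining the two bounds, with at most one $\bv$, at most two $j$'s for that $\bv$, and at most $3$ intervals per pair, we obtain $\#\{I\in\mathcal{M}_{q-1,q}:I\subset J\}\le 1\cdot 2\cdot 3=6$, and this is uniform in $J\in\mathcal{J}_{q-1}$. I do not see a real obstacle here: all the heavy lifting has been done in the preceding propositions (\Cref{key1}, \Cref{key0}, and \Cref{interval inside domain}), so the argument is essentially a bookkeeping step that packages those results together with the size matching between $\mathcal{V}_q$ and the partition scale $R^{-(q+1)}$.
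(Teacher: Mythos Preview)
Your proof is correct and follows essentially the same approach as the paper's own proof: reduce to $p=q-1$, invoke \Cref{key} to get at most one $\bv$ and two admissible $j$'s, use \Cref{interval inside domain} together with the size constraint from $\mathcal{V}_q$ to bound the diameter of each $\Delta^{j,m}_{\ta}(\bv)$ by $2|I_0|R^{-(q+1)}$, and conclude that each such domain meets at most three intervals of $\mathcal{I}_{q+1}$. The final bookkeeping $1\cdot 2\cdot 3=6$ is identical.
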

\begin{proof}
    If $p\neq q-1$, then $f_{p,q}=0$, which trivially satisfies the upper bound.

    If $p=q-1$, then given $J\in\mathcal{J}_p$, by \cref{key}, there is at most one $\bv=\frac{\bp}{m}\in\mathcal{V}_q$ and at most two $j\in[1,m^*]\cap\N$ such that $\bv\in \mathcal{V}^{j,m}$, and \begin{equation}\label{split in jm} \Delta^{j,m}_{\ta}(\bv)\cap J\neq\emptyset. \end{equation}  By \cref{interval inside domain}, we have that

    $$4\widetilde \Delta^{j,m}_{\ta}(\bv)\cap J\neq\emptyset.$$
    Also, by \cref{def: V_n} and \cref{def: inho_interval}, $|4 \widetilde\Delta^{j,m}_{\ta}(\bv)|\leq 2|I_0|R^{-q-1}$. 
    Then for each $j$, the domain $\Delta^{j,m}_{\ta}(\bv)$ can intersect at most $3$ of intervals in $\{I\subset J:I\in\mathcal{I}_{q+1}\}$, which completes the proof.
\end{proof}
\subsection{The new Cantor set}.

We construct the Cantor set by induction on $q\geq 0$. 
Let $\mathcal{J}_0'=\{I_0\}$. Let us assume that $\mathcal{J}'_1,\cdots, \mathcal{J}'_q$, for $q\geq 0$ are already constructed. Let $\mathcal{I}_{q+1}'=\mathbf{Par}_R(\mathcal{J}_q').$ Then we define,
$$\hat{\mathcal{J}}_{p,q}':=\mathcal{I}_{q+1}'\cap(\mathcal{M}_{p,q}\cup\hat{\mathcal{J}}_{p,q}),~~\forall~~ 0\leq p\leq q .$$ Now, let $\hat{\mathcal{J}}_{q}'=\cup_{p=0}^q\hat{\mathcal{J}}_{p,q}'$ . Then we remove the subcollection $\hat{\mathcal{J}}_q'$ from $\mathcal{I}_{q+1}'$, i.e. $\mathcal{J}_{q+1}'=\mathcal{I}_{q+1}'\setminus\hat{\mathcal{J}}_q'.$ 
\begin{lemma}\label{prime inside old}
For $q\geq 1$, $\mathcal{J}_q'\subseteq \mathcal{J}_q$, and $\mathcal{I}'_{q}\subseteq\mathcal{I}_q$. 
\end{lemma}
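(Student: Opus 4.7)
The plan is to prove both inclusions simultaneously by induction on $q$, using the fact that the new construction starts from $\mathcal{J}_0'=\mathcal{J}_0=\{I_0\}$ and at every stage removes \emph{at least as much} as the old construction does (since $\hat{\mathcal{J}}_{p,q}'$ is defined to contain $\mathcal{I}_{q+1}'\cap\hat{\mathcal{J}}_{p,q}$, plus the additional ``inhomogeneous'' pieces coming from $\mathcal{M}_{p,q}$).

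\textbf{Base case $q=1$.} Since $\mathcal{J}_0'=\mathcal{J}_0=\{I_0\}$, we immediately get $\mathcal{I}_1'=\mathbf{Par}_R(\mathcal{J}_0')=\mathbf{Par}_R(\mathcal{J}_0)=\mathcal{I}_1$. For the $\mathcal{J}$-inclusion, note that $\mathcal{M}_{p,q}$ is empty whenever $p\neq q-1$ (and in particular for $p=q=0$), so $\hat{\mathcal{J}}_{0,0}'=\mathcal{I}_1'\cap\hat{\mathcal{J}}_{0,0}\supseteq \hat{\mathcal{J}}_{0,0}$, which gives $\mathcal{J}_1'\subseteq\mathcal{I}_1\setminus\hat{\mathcal{J}}_0=\mathcal{J}_1$.

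\textbf{Inductive step.} Assume $\mathcal{J}_q'\subseteq\mathcal{J}_q$. Since $\mathbf{Par}_R$ is set-monotone (applying it to a subcollection produces a subcollection of the partition of the bigger collection), we obtain
\[
\mathcal{I}_{q+1}'=\mathbf{Par}_R(\mathcal{J}_q')\subseteq\mathbf{Par}_R(\mathcal{J}_q)=\mathcal{I}_{q+1}.
\]
Now take any $I\in\mathcal{J}_{q+1}'=\mathcal{I}_{q+1}'\setminus\hat{\mathcal{J}}_q'$. Then $I\in\mathcal{I}_{q+1}'\subseteq\mathcal{I}_{q+1}$. Suppose for contradiction that $I\in\hat{\mathcal{J}}_q$, so that $I\in\hat{\mathcal{J}}_{p,q}$ for some $0\leq p\leq q$. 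Then by the definition $\hat{\mathcal{J}}_{p,q}'=\mathcal{I}_{q+1}'\cap(\mathcal{M}_{p,q}\cup\hat{\mathcal{J}}_{p,q})$ we would have $I\in\hat{\mathcal{J}}_{p,q}'\subseteq\hat{\mathcal{J}}_q'$, contradicting $I\notin\hat{\mathcal{J}}_q'$. Hence $I\notin\hat{\mathcal{J}}_q$, so $I\in\mathcal{I}_{q+1}\setminus\hat{\mathcal{J}}_q=\mathcal{J}_{q+1}$. This closes the induction.

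There is no real obstacle: the statement is a structural book-keeping claim, and the whole point of the way $\hat{\mathcal{J}}_{p,q}'$ was defined (as $\mathcal{I}_{q+1}'\cap(\mathcal{M}_{p,q}\cup\hat{\mathcal{J}}_{p,q})$) is precisely to make this nesting automatic. The only subtlety worth flagging is the monotonicity of $\mathbf{Par}_R$, which follows immediately from its definition as $\mathbf{Par}_R(\mathcal{J})=\bigcup_{I\in\mathcal{J}}\mathbf{Par}_R(I)$.
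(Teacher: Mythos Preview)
Your proof is correct and follows essentially the same approach as the paper's: induction on $q$, using monotonicity of $\mathbf{Par}_R$ for the $\mathcal{I}$-inclusion and the observation $\hat{\mathcal{J}}_{p,q}\cap\mathcal{I}_{q+1}'\subseteq\hat{\mathcal{J}}_{p,q}'$ for the $\mathcal{J}$-inclusion. The paper phrases the inductive step via set algebra rather than element-chasing by contradiction, and notes that in fact $\mathcal{J}_1'=\mathcal{J}_1$ in the base case, but these are cosmetic differences.
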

\begin{proof}
 We prove this lemma by induction. When $q=1$, note that $\mathcal{I}_1'=\mathcal{I}_1,$ and $\mathcal{\hat J}'_0= \mathcal{\hat J}'_{0,0}= \mathcal{I}_1'\cap \mathcal{\hat J}_{0,0}= \mathcal{I}_1\cap \mathcal{\hat J}_{0}= \mathcal{\hat J}_{0}.$ So $\mathcal{J}_1'=\mathcal{I}_1'\setminus \mathcal{\hat J}_{0}'= \mathcal{I}_1\setminus \mathcal{\hat J}_{0}=\mathcal{J}_1$.

 By the induction hypothesis, $\mathcal{J}'_i\subseteq \mathcal{J}_i$ and $\mathcal{I}'_i\subseteq \mathcal{I}_i$ for $i=1,\cdots,q.$ Thus, $\mathcal{I}'_{q+1}= \mathbf{Par}_R(\mathcal{J}_q')\subseteq \mathbf{Par}_R(\mathcal{J}_q)= \mathcal{I}_{q+1}.$ Note that $\hat{\mathcal{J}}_{p,q}\cap \mathcal{I}_{q+1}'\subseteq \hat{\mathcal{J}}_{p,q}',\forall~0\leq p\leq q\implies \hat{\mathcal{J}}_{q}\cap \mathcal{I}_{q+1}'\subseteq \hat{\mathcal{J}}_{q}'.$ Then $\mathcal{J}_{q+1}' \subseteq \mathcal{I}_{q+1}'\setminus \left( \mathcal{I}_{q+1}'\cap \hat{\mathcal{J}}_q\right)= \left(\mathcal{I}_{q+1}'\cap\mathcal{I}_{q+1}\right)\setminus \left( \mathcal{I}_{q+1}'\cap \hat{\mathcal{J}}_q\right)= \left(\mathcal{I}_{q+1}\setminus\hat{\mathcal{J}}_q\right)\cap\mathcal{I}_{q+1}'\subseteq\mathcal{J}_{q+1}$ and the lemma follows.
\end{proof}
Lastly, we define a new Cantor set as follows:

\begin{equation}
    \label{new cantor}
\mathcal{K}_{\infty}':=\bigcap_{q\geq 0}\bigcup_{I\in\mathcal{J}_q'}I.
\end{equation}

\begin{lemma}\label{new inside old} Let $\mathcal{K}'_{\infty}$ be the Cantor set we constructed in \cref{new cantor}. 
Then,
$$\mathcal{K}'_{\infty}\subseteq \mathcal{K}_{\infty}\subseteq \supp{\mu},$$ where $\mathcal{K}_{\infty}$ is the Cantor set defined in \cref{subsection: Notation}
, and $\mu$ is as in \cref{thm: twin main}
\end{lemma}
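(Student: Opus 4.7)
The first containment $\mathcal{K}'_{\infty}\subseteq \mathcal{K}_{\infty}$ follows immediately from \cref{prime inside old}. Indeed, that lemma gives $\mathcal{J}'_q\subseteq\mathcal{J}_q$ for every $q\geq 0$, so $\bigcup_{I\in\mathcal{J}'_q}I\subseteq\bigcup_{I\in\mathcal{J}_q}I$, and intersecting over $q$ yields the desired inclusion.

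For the second containment $\mathcal{K}_{\infty}\subseteq\supp\mu$, the plan is to exploit the fact that the collection $\hat{\mathcal{J}}_{q,q}$ (see \cref{jqq}) removes precisely those intervals $I\in\mathcal{I}_{q+1}$ with $\mu(I)<(3C)^{-1}|I|^{\alpha}$. Consequently, any $I\in\mathcal{J}_{q+1}$ satisfies $\mu(I)\geq (3C)^{-1}|I|^{\alpha}>0$, so in particular $I\cap\supp\mu\neq\emptyset$.

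Now fix $x\in\mathcal{K}_{\infty}$. By \cref{K_infty}, for every $q\geq 1$ there exists $I_q\in\mathcal{J}_q$ with $x\in I_q$. By the previous paragraph, we may pick $y_q\in I_q\cap\supp\mu$. Since $|I_q|=|I_0|R^{-q}\to 0$ by \cref{length}, we have $|y_q-x|\leq |I_q|\to 0$, so $y_q\to x$. As $\supp\mu$ is closed in $\R$, this forces $x\in\supp\mu$, completing the proof.

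The argument is essentially routine once one invokes \cref{prime inside old} and the defining property \cref{jqq} of $\hat{\mathcal{J}}_{q,q}$; there is no substantive obstacle, since the removal rule for $\hat{\mathcal{J}}_{q,q}$ was designed precisely to guarantee that surviving intervals carry positive $\mu$-mass.
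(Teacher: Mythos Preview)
Your proof is correct. The first inclusion is handled exactly as in the paper, via \cref{prime inside old}. For the second inclusion $\mathcal{K}_{\infty}\subseteq\supp\mu$, the paper simply cites \cite[Proposition~2.6]{BNY22}, whereas you supply the direct argument: surviving intervals avoid $\hat{\mathcal{J}}_{q,q}$ and hence carry positive $\mu$-mass, so each contains a point of $\supp\mu$, and closedness of $\supp\mu$ together with $|I_q|\to 0$ finishes. This is precisely the content behind the cited result, so your version is self-contained rather than genuinely different.
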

\begin{proof}

    Note $\mathcal{J}_0'=\mathcal{J}_0$, and by \cref{prime inside old}, $\mathcal{J}'_{q}\subseteq\mathcal{J}_q,\forall q\geq 1$. Hence $\mathcal{K}_{\infty}'=\cap_{q\geq 0}\cup_{I\in\mathcal{J}_q'}I\subseteq\cap_{q\geq 0}\cup_{I\in\mathcal{J}_q}I=\mathcal{K}_{\infty}.$ 
    The last inclusion, $\mathcal{K}_{\infty}\subseteq \supp{\mu}$, was already proved in \cite[Proposition 2.6]{BNY22}.
\end{proof}
\begin{lemma}\label{Cantor inside bad}
Let $\mathcal{K}'_{\infty}$ be as before, $\mathbf{Bad}_{\ta}(\mathbf{r})$ be as defined in \cref{def: simul_inho}. Then
$$\mathcal{K}'_{\infty}\subseteq\varphi^{-1}(\mathbf{Bad}_{\ta}(\mathbf{r})).$$\end{lemma}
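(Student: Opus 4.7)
The plan is to argue by contradiction, combining the avoidance statement in \cref{avoiding inho_interval} with the design of the Cantor construction of $\mathcal{K}'_{\infty}$. Suppose there exists $x\in\mathcal{K}'_{\infty}\setminus\varphi^{-1}(\mathbf{Bad}_{\ta}(\br))$. Since $\mathcal{K}'_{\infty}\subseteq I_0$, applying \cref{avoiding inho_interval} contrapositively produces an integer $m>17|I_0|^{-1}d_1$, an index $1\leq j\leq m^{\star}$, and a rational $\bv=\bp/m\in\mathcal{V}^{j,m}$ such that $x\in\Delta^{j,m}_{\ta}(\bv)$. By \cref{eqn: union}, there exists $q\in\N$ with $\bv\in\mathcal{V}_q$; by \cref{empty} this forces $q\geq\lambda_1+2\geq 1$, so the bookkeeping below is valid.

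Next I locate $x$ inside a surviving interval at the level where $\mathcal{V}_q$ is dealt with. Since $x\in\mathcal{K}'_{\infty}$, \cref{new cantor} gives some $I\in\mathcal{J}'_{q+1}\subseteq\mathcal{I}'_{q+1}$ with $x\in I$. Because $x\in I\cap\Delta^{j,m}_{\ta}(\bv)$, the intersection is nonempty, and $\bv\in\mathcal{V}_q$, so by \cref{def: M} (applied with the index shift $p=q-1$) we conclude $I\in\mathcal{M}_{q-1,q}$.

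However, the definition of the new Cantor set then forces
$$I\in\mathcal{I}'_{q+1}\cap\mathcal{M}_{q-1,q}\subseteq\hat{\mathcal{J}}'_{q-1,q}\subseteq\hat{\mathcal{J}}'_{q},$$
so $I$ is removed at step $q$ and hence $I\notin\mathcal{J}'_{q+1}$, contradicting the choice of $I$. This contradiction proves $\mathcal{K}'_{\infty}\subseteq\varphi^{-1}(\mathbf{Bad}_{\ta}(\br))$. The only nontrivial technical point I expect to double-check is the index bookkeeping between $\mathcal{V}_q$, $\mathcal{M}_{q-1,q}$, and the construction step producing $\mathcal{J}'_{q+1}$ from $\mathcal{J}'_q$; the substantive content is already packaged in \cref{key} (which bounds $f_{p,q}$ and thus guarantees that enough intervals survive in the new Cantor set) and in \cref{avoiding inho_interval} (which reduces membership in $\varphi^{-1}(\mathbf{Bad}_{\ta}(\br))$ to avoidance of the dangerous domains $\Delta^{j,m}_{\ta}(\bv)$).
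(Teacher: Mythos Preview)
Your argument is correct and follows essentially the same route as the paper: both use \cref{avoiding inho_interval} to reduce the problem to showing that points of $\mathcal{K}'_{\infty}$ avoid every $\Delta^{j,m}_{\ta}(\bv)$ with $\bv\in\mathcal{V}$, and both deduce this from the fact that at stage $q$ the construction removes $\mathcal{I}'_{q+1}\cap\mathcal{M}_{q-1,q}$. The paper phrases this as a direct inclusion while you phrase it as a contradiction, but the content is identical. One small remark: your closing mention of \cref{key} is not needed for \emph{this} lemma---\cref{key} feeds into the nonemptiness of $\mathcal{K}'_{\infty}$ via the bound on $f_{p,q}$, whereas the inclusion $\mathcal{K}'_{\infty}\subseteq\varphi^{-1}(\mathbf{Bad}_{\ta}(\br))$ holds regardless of whether $\mathcal{K}'_{\infty}$ is empty.
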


\begin{proof}  
In the construction of the Cantor set $\mathcal{K}'_{\infty}$, we have that all the intervals in $\mathcal{J}_q'$ avoids $\mathcal{M}_{p,q-1}$, and in particular, $\mathcal{M}_{q-2,q-1}.$ when $q\geq2.$ Now in view of \cref{def: M}, we have $\forall q\geq 2, ~\forall~ I\in\mathcal{J}_q',~ \forall~ \bv=\frac{\bp}{m}\in\mathcal{V}_{q-1}$, for all $1\leq j\leq m^\star,$
$$ \Delta_{\ta}^{j,m}(\bv)\cap I=\emptyset \text{ when } \bv\in \mathcal{V}^{j,m}.$$ 
Hence for any $q\geq 2,$

$$ \bigcup_{I\in\mathcal{J}_q'}I\subseteq I_0\setminus 
\bigcup_{m> 17\vert I_0\vert^{-1}d_1}\bigcup_{j=1}^{m^\star}\bigcup_{\bv\in \mathcal{V}_{q-1}^{j,m}}\Delta^{j,m}_{\ta}(\bv).$$
Therefore, by \cref{avoiding inho_interval} and \cref{union_sublevel}, \cref{new cantor}, $$\mathcal{K}_{\infty}'\subseteq I_0\setminus \bigcup_{m> 17\vert I_0\vert^{-1}d_1}\bigcup_{j=1}^{m^\star}\bigcup_{\bv\in\mathcal{V}^{j,m}}\Delta^{j,m}_{\ta}(\bv)\subseteq\varphi^{-1}(\mathbf{Bad}_{\ta}(\br)).$$

\end{proof}

Now by \cref{new inside old} and \cref{Cantor inside bad}, we have \begin{equation}\label{new cantor set inside curve and support}
\mathcal{K}'_\infty\subseteq\varphi^{-1}(\mathbf{Bad}_{\ta}(\mathbf{r}))\cap\supp\mu.
\end{equation}

\begin{remark}
Note that since  $\mathcal{K}'_\infty\subseteq \mathcal{K}_\infty,$ by \cite[Proposition 2.6]{BNY22}, and \cref{new cantor set inside curve and support},
$$
\mathcal{K}'_\infty\subseteq \varphi^{-1}(\mathbf{Bad}_{\ta}(\mathbf{r}))\cap \varphi^{-1}(\mathbf{Bad}(\mathbf{r}))\cap\supp\mu.
$$
\end{remark}

Now to prove \cref{thm: twin main}, the main task is to show that $\mathcal{K}'_{\infty}$ is non-empty, which will be the content of the rest of this section.

\subsection{New upper bounds}
For all $0\leq p\leq q,$ let us define 
$$
h_{p,q}':=\max_{J\in\mathcal{J}'_{p}}\#\{I\in\hat{\mathcal{J} }'_{p,q}:I\subset J\}.$$
    By definition of $\hat{\mathcal{J}}'_{p,q}$ and \cref{prime inside old}, we have that \begin{equation*}\begin{split}h_{p,q}'\leq\max_{J\in\mathcal{J}_{p}}\#\{I\in\hat{\mathcal{J} }'_{p,q}:I\subset J\}
    &\leq \max_{J\in\mathcal{J}_{p}}\#\{I\in\hat{\mathcal{J }}_{p,q}:I\subset J\}+\max_{J\in\mathcal{J}_{p}}\#\{I\in\mathcal{M}_{p,q}: I\subset J\}\\=&h_{p,q}+f_{p,q}.\end{split}\end{equation*} 
    Therefore,
    \begin{equation}\label{triangle}
   h_{p,q}'\leq h_{p,q}+f_{p,q}.\end{equation}

\begin{proposition}
\label{hqq'}
Let $C,\alpha$ be as in \cref{thm: twin main}. For $\forall q\geq 0$, and $R^{\alpha}\geq 21C^2$
    \begin{equation}
        h_{q,q}'\leq R-(4C)^{-2}R^{\alpha}.
    \end{equation}
\end{proposition}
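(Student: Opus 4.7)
The plan is to combine three ingredients already assembled in the paper. By definition, the primed collections differ from the original ones only by the insertion of the sets $\mathcal{M}_{p,q}$, and at the diagonal index $p=q$ these insertions contribute nothing. More precisely, by \eqref{triangle} applied with $p=q$, we have
\begin{equation*}
h_{q,q}' \leq h_{q,q} + f_{q,q}.
\end{equation*}
By definition $\mathcal{M}_{p,q} = \emptyset$ for every pair $(p,q)$ with $p \neq q-1$; in particular this applies to $p = q$, giving $f_{q,q} = 0$ as recorded in \eqref{f0q}. Combining with the bound $h_{q,q} \leq R - (4C)^{-2}R^{\alpha}$ from \eqref{hqq} (which applies under the hypothesis $R^\alpha \geq 21 C^2$) completes the proof.

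In short, the only step is the one-line calculation
\begin{equation*}
h_{q,q}' \;\stackrel{\eqref{triangle}}{\leq}\; h_{q,q} + f_{q,q} \;\stackrel{\eqref{f0q}}{=}\; h_{q,q} \;\stackrel{\eqref{hqq}}{\leq}\; R - (4C)^{-2}R^{\alpha}.
\end{equation*}
There is no genuine obstacle here: the real work has been done earlier, first in establishing \eqref{hqq} (inherited from \cite{BNY22}), and second in verifying that the newly removed family $\mathcal{M}_{q,q}$ is empty, a consequence of our decision to only register dangerous rationals $\bv \in \mathcal{V}_{q+1}$ at the level $(q,q+1)$ rather than at $(q+1,q+1)$. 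Thus the conceptual content of the proposition is the bookkeeping choice already made in \eqref{def: M}, which guarantees that the dangerous domains contributing to \lf-type removals at scale $q$ appear one level later than the diagonal index, so that the BNY-style measure bound at the diagonal is preserved unchanged for the modified Cantor set.
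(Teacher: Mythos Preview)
Your proof is correct and follows exactly the same approach as the paper: apply \eqref{triangle} at $p=q$, use \eqref{f0q} to kill $f_{q,q}$, and invoke \eqref{hqq} for the remaining bound on $h_{q,q}$. The paper's proof is just the one-line version of your argument.
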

\begin{proof}
    By \cref{triangle}, \cref{f0q}, and \cref{hqq}, we have $h_{q,q}'\leq h_{q,q}+f_{q,q}\leq  R-(4C)^{-2}R^{\alpha}.$
\end{proof}
\begin{proposition}
\label{hpq'}
    There exist constants $R_0'\geq 1,C_0'>0,\eta_0'>0$, such that $\forall R\geq R_0'$ and $~0\leq p<q$,\begin{equation}
    h_{p,q}'\leq C_0'R^{\alpha(1-\eta_0')(q-p+1)}.
    \end{equation}
    
\end{proposition}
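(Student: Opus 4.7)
The plan is immediate: combine the two estimates already in hand through the triangle-type inequality \cref{triangle}, namely $h_{p,q}' \leq h_{p,q} + f_{p,q}$. The bound on $h_{p,q}$ (the homogeneous Cantor-removal count) is given by Proposition \ref{hpq}, which yields $h_{p,q} \leq C_0 R^{\alpha(1-\eta_0)(q-p+1)}$ for all $R \geq R_0$ and $p < q$. The bound on $f_{p,q}$ (counting intervals removed by the new inhomogeneous obstructions $\mathcal{M}_{p,q}$) is provided by Proposition \ref{fpq}, and is uniformly bounded by the constant $6$.

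The key observation is that the constant bound $6$ on $f_{p,q}$ is negligible compared to the exponentially growing bound on $h_{p,q}$, and can be absorbed into the prefactor. Concretely, I take
\[
R_0' := \max(R_0, 1), \qquad \eta_0' := \eta_0, \qquad C_0' := C_0 + 6.
\]
Then for $R \geq R_0'$ and $0 \leq p < q$, since $R \geq 1$ and $(1 - \eta_0)(q - p + 1) \geq 0$, we have $R^{\alpha(1-\eta_0)(q-p+1)} \geq 1$, and therefore
\[
h_{p,q}' \leq h_{p,q} + f_{p,q} \leq C_0 R^{\alpha(1-\eta_0)(q-p+1)} + 6 \leq (C_0 + 6) R^{\alpha(1-\eta_0)(q-p+1)} = C_0' R^{\alpha(1-\eta_0')(q-p+1)},
\]
which is the desired estimate.

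There is essentially no obstacle in this step, since the substantive work has been done upstream: the geometric argument bounding $f_{p,q}$ was carried out in Proposition \ref{fpq} using \cref{key}, and the dynamical argument bounding $h_{p,q}$ was recalled from \cite{BNY22}. The proposition here is only the bookkeeping step packaging these two inputs for use in \cref{nemp}.
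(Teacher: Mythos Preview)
Your proof is correct and follows exactly the same approach as the paper's: apply the inequality \eqref{triangle}, invoke Proposition~\ref{hpq} for $h_{p,q}$ and Proposition~\ref{fpq} for $f_{p,q}$, and absorb the constant $6$ into the prefactor. The only cosmetic difference is the choice of constants---the paper takes $R_0'>\max\{R_0,\,1/|I_0|\}$ (so that the standing assumption \eqref{R_0'} is subsumed) and $C_0'\geq\max\{16,\,8C_0\}$, whereas you take the tighter $R_0'=\max(R_0,1)$ and $C_0'=C_0+6$; either choice works for the proposition as stated.
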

\begin{proof}
   Let $R_0, C_0$ and $\eta_0$ be as in \cref{hpq}.  Choose $R_0',C_0',\eta_0'$ such that $R_0'>\max\{ R_0,\frac{1}{|I_0|}\},C_0'\geq\max\{16,8C_0\}$ and $\eta_0=\eta_0'$. Then by \cref{triangle}, \cref{hpq} and \cref{fpq},
    \begin{equation}
        h_{p,q}'\leq h_{p,q}+f_{p,q}\leq C_0R^{\alpha(1-\eta_0)(q-p+1)}+6\leq C_0'R^{\alpha(1-\eta_0')(q-p+1)}.
    \end{equation}
   
\end{proof}

\subsection{Completing the proof of \cref{thm: twin main}}.

As mentioned earlier, the non-emptiness of $\mathcal{K}'_{\infty}$ implies \cref{thm: twin main}. Given \cref{hqq'} and \cref{hpq'}, the rest of the proof is exactly the same as in \cite[Section 4]{BNY22}, which we rewrite for completion. By \cref{nemp}, it suffices to prove that for $q\geq0$, \begin{equation}t_q':=R-h_{q,q}'-\sum_{j=1}^q\frac{h_{q-j,q}'}{\prod_{i=1}^jt_{q-i}'}\geq(6C)^{-2}R^{\alpha}
\label{non}.\end{equation}

We prove this by induction. When $q=0$, $t_0'=R-h_{0,0}'\geq (4C)^{-2}R^{\alpha}$ by \cref{hqq'}, which satisfies \cref{non}.

Suppose \cref{non} holds for $q<q_1$ where $q_1\geq 1$. Then by \cref{hpq'} and \cref{hqq'}, we have $$t_{q_1}'\geq(4C)^{-2}R^{\alpha}-R^{\alpha}\left(C_0'R^{-\eta_0'\alpha}\sum_{j=1}^{\infty}\left(\frac{36C^2}{R^{\eta_0'\alpha}}\right)^j\right).$$
Thus, we only have to make sure that $R$ is large enough such that $C_0'R^{-\eta_0'\alpha}<\frac{1}{32C^2}$ and $\sum_{j=1}^{\infty}(\frac{36C^2}{R^{\eta_0'\alpha}})^j\leq 1$, while satisfying the lower bounds for $R$ in \cref{R_0'}, \cref{hqq'} and \cref{hpq'}, to make the induction work. Thus the proof is done.

We end this section with two minor remarks.

    \begin{remark}Let $\br$ and $\varphi$ be as in \cref{thm: main}.  Let $\tilde\theta_i:U\to \R$ be Lipschitz function for $1\leq i\leq n$. Then we showed,
    $$\{x\in U:\liminf_{q\in\mathbb{Z}\setminus\{0\}, \vert q\vert \to\infty}\max_{1\leq i\leq n}\vert q\varphi_i(x)-\tilde\theta_i(x)\vert_{\mathbb{Z}}^{1/r_i}   \vert q\vert>0 \}$$ 
    is absolute winning on $U.$ The main difference between this statement and \cref{thm: main} is that, in the case of curves, we can take the inhomogeneous function to be any Lipschitz function, and don't have to restrict to composite function $\theta^\varphi_i$ as in \cref{subsection: Notation}.
    \end{remark}
\begin{remark} \label{remk on defn}  Note that  $\textbf{Bad}(\br)$ is the same as the set 
    $$
    \{\mathbf{x}\in\mathbb{R}^n:\inf_{q\in\mathbb{Z}\setminus\{0\}}\max_{1\leq i\leq n}\vert qx_i\vert_{\mathbb{Z}}^{1/r_i}   \vert q\vert>0 \}.
    $$
    But in the inhomogeneous set-up, one may not replace `$\liminf$' in \cref{def: simul_inho} by $`\inf$'. For instance, if $\theta_i(\bx)=ax_i+b$, for $i=1,\cdots,n,$ 
    where $a\in \Z\setminus\{0\}, b\in\Z$, one can easily see that replacing `$\liminf$' by `$\inf$' will make the set of bad to be empty. However, this kind of function is not interesting since the corresponding inhomogeneous set is actually the same as a homogeneous set. So for the sake of generality, we took $`\liminf$' in the definition \eqref{def: simul_inho}. 
    \end{remark}
\section{Final Remark}\label{final}
In this paper, we consider 
weighted inhomogeneous badly approximable vectors in the simultanmeous setting. One can consider a dual analogue of inhomogeneous badly approximable vectors as follows.
For any map $
\theta:\R^n\to\R,$ we define the following set:
\begin{equation}
\label{def: dual_inho}
    \widetilde{\mathbf{Bad}_{\theta}(\mathbf{r})}:=\{\mathbf{x}\in\mathbb{R}^n:\liminf_{\bq=(q_i)\in\Z^n\setminus\{0\}, \Vert \bq\Vert\to\infty}\vert \bq\cdot \bx-\theta(\bx)\vert_{\mathbb{Z}}  \max_{1\leq i\leq n}\vert q_i\vert^{1/r_i} >0 \}.
\end{equation} 
In \cite{Beresnevich2015}, it was shown that $\widetilde{\mathbf{Bad}_{0}(\mathbf{r})}=\mathbf{Bad}_{\mathbf{0}}(\mathbf{r}),$ using Khintchine’s transference principle. But more generally in the inhomogeneous setting, it is not clear if the dual and simultaneous settings are related. Therefore, it is natural to consider the following problem.
 \begin{problem}
     Let $\varphi:U \subset \R\to \R^n$ be as in \cref{thm: main}, and let $\theta:\R^n\to\R$ has nice properties (one may just take a constant map). Prove that $\varphi^{-1}(\widetilde{\mathbf{Bad}_{\theta}(\mathbf{r})})$ is absolute winning on $U.$
 \end{problem}
Another interesting direction is to find the measure of the weighted inhomogeneous badly vectors inside nondegenerate curves. More precisely, 
\begin{problem}
    Let $\varphi:U\subset \R \to \R^n$ and $\ta$ be as in \cref{thm: main}. Show that $$\lambda(\varphi^{-1}(\mathbf{Bad}_{\ta}(\mathbf{r}))=0,$$ where $\lambda$ is the Lebesgue measure on $\R.$ 
\end{problem}
Our main Theorem \ref{thm: main}, guarantees that the set has full Hausdorff dimension. So, it is interesting to find the Lebesgue measure of this set. One can follow the general framework developed in \cite{BDGW_null} to achieve such results. 

\subsection*{Acknowledgments} We thank the anonymous referee for their very detailed report that helped us clarify many details. We thank the University of Michigan for giving us the opportunity to work on this project. SD thanks Ralf Spatzier for encouraging her in pursuing this project which started in an REU program.   
\bibliographystyle{abbrv}
\bibliography{inhomo_curve.bib}

\end{document}